\newcommand{\Real}{\ensuremath{\mathbb{R}}}
\DeclareMathOperator*{\st}{subject\;to}
\def\spose#1{\hbox to 0pt{#1\hss}}
\def\text #1{\hbox{\quad#1\quad}}
\def\nthinsp{\mskip -2   mu}
\def\superstar{^{\raise 0.5pt\hbox{$\nthinsp *$}}}
\def\SUPERSTAR{^{\raise 0.5pt\hbox{$*$}}}
\def\lamstarT {\lambda^{\raise 0.5pt\hbox{$\nthinsp *$}T}}
\def\Cscr{{\cal C}}
\def\Kscr{{\cal K}}
\def\Nscr{{\cal N}}
\def\Nscr{{\cal N}}
\def\Xscr{{\cal X}}
\def\hbar{\skew{4.2}\bar h}
		\def\bk1{{\rm 1\kern-.17em l}}
		\def\bkD{{\rm I\kern-.17em D}}
		\def\bkR{{\rm I\kern-.17em R}}
		\def\bkP{{\rm I\kern-.17em P}}
		\def\bkY{{\bf \kern-.17em Y}}
		\def\bkZ{{\bf \kern-.17em Z}}
		\def\beq{\begin{eqnarray}}
		\def\bc{\begin{center}}
		\def\be{\begin{enumerate}}
		\def\bi{\begin{itemize}}
		\def\bs{\begin{small}}
		\def\bS{\begin{slide}}
		\def\ec{\end{center}}
		\def\ee{\end{enumerate}}
		\def\ei{\end{itemize}}
		\def\es{\end{small}}
		\def\eS{\end{slide}}
		\def\eeq{\end{eqnarray}}
		\def\qed{\quad \vrule height7.5pt width4.17pt depth0pt}
	\def\cp2problem#1#2#3#4{\fbox
		 {\begin{tabular*}{0.9\textwidth}
			{@{}l@{\extracolsep{\fill}}l@{\extracolsep{6pt}}l@{\extracolsep{\fill}}c@{}}
				#1 & & $#4 $
			\end{tabular*}}}
		\renewcommand{\emph}[1]{\textbf{#1}}
		\def\bk1{{\rm 1\kern-.17em l}}
		\def\bkD{{\rm I\kern-.17em D}}
		\def\bkR{{\rm I\kern-.17em R}}
		\def\bkP{{\rm I\kern-.17em P}}
		\def\bkZ{{\bf{Z}}}
\newcommand {\beeq}[1]{\begin{equation}\label{#1}}
\newcommand {\eeeq}{\end{equation}}
\newcommand {\bea}{\begin{eqnarray}}
\newcommand {\eea}{\end{eqnarray}}
\def\texitem#1{\par\smallskip\noindent\hangindent 25pt
               \hbox to 25pt {\hss #1 ~}\ignorespaces}
\def\st{\mbox{subject to}}
	\definecolor{shadethmcolor}{HTML}{EDF8FF}
	\definecolor{shaderulecolor}{HTML}{45CFFF}
	\definecolor{shaderulecolor}{gray}{0}
	\colorlet{shadecolor}{orange!15}
	\definecolor{shadethmcolor}{HTML}{EDF8FF}
	\definecolor{shaderulecolor}{HTML}{45CFFF}
	\definecolor{shaderulecolor}{gray}{0}
	\colorlet{shadecolor}{orange!15}
	\definecolor{shadethmcolor}{HTML}{EDF8FF}
	\definecolor{shaderulecolor}{HTML}{45CFFF}
\definecolor{shaderulecolor}{gray}{0}
 	\colorlet{shadecolor}{orange!15}
\newtheorem{theorem}{Theorem}
\newtheorem{assumption}{Assumption}
\newtheorem{definition}{Definition}
\newtheorem{lemma}{Lemma}
\newtheorem{proposition}{Proposition}
\newtheorem{remark}{Remark}
\def\bko{{\rm 1\kern-.17em l}}
\def\Nscr{{\mathcal N}}
\def\Kscr{{\mathcal K}}
\newcommand{\f}{\tilde{f}}
\def\be{\begin{enumerate}}
\def\ee{\end{enumerate}}
\def\Nscr{{\mathcal N}}
\def\st{\mbox{subject to}}
 \newcommand{\remove}[1]{}
\newcommand{\x}{{\mathbf{x}}}
\newcommand{\y}{{\mathbf{y}}}
\def\Real{\mathbb{R}}
\title{\bf Zeroth-order randomized block methods for constrained minimization of expectation-valued Lipschitz continuous functions}
\author{Uday V. Shanbhag \thanks{Industrial \& Manufacturing Eng.,
	Pennsylvania State University, \texttt{udaybag@psu.edu}; Shanbhag acknowledges the support from NSF CMMI-1538605 and DOE ARPA-E award DE-AR0001076. } \and Farzad Yousefian\thanks{School of Industrial Eng. \& Management, Oklahoma State University, \texttt{farzad.yousefian@okstate.edu}; 
		Yousefian acknowledges the support of the NSF through CAREER grant ECCS-1944500.		
		} }
\begin{document}

\maketitle
\thispagestyle{empty}
\pagestyle{empty}

\begin{abstract}  
    We consider the minimization of  an $L_0$-Lipschitz continuous and
    expectation-valued function, denoted by $f$ and defined as $f(\x)
    \triangleq \mathbb{E}[\tilde{f}(\x,\omega)]$, over a Cartesian product of
    closed and convex sets with a view towards obtaining both asymptotics as
    well as rate and complexity guarantees for computing an approximate
    stationary point (in a Clarke sense). We adopt a smoothing-based approach
    reliant on minimizing $f_{\eta}$ where $f_{\eta}(\x) \triangleq
    \mathbb{E}_{u}[f(\x+\eta u)]$, $u$ is a random variable defined on a unit
    sphere, and $\eta > 0$.  In fact, it is observed that a stationary point of
    the $\eta$-smoothed problem is a $2\eta$-stationary point for the original
    problem in the Clarke sense. In such a setting, we derive a suitable
    residual function that provides a metric for stationarity for the smoothed
    problem.    By leveraging a zeroth-order framework reliant on utilizing
    sampled function evaluations implemented in a block-structured regime, we
    make two sets of contributions for the sequence generated by the proposed
    scheme. (i) The residual function of the smoothed problem tends to zero
    almost surely along the generated sequence; (ii) To compute an $\x$ that
    ensures that the expected {norm of the} residual {of the $\eta$-smoothed
    problem} is within $\epsilon$ requires {no greater than}
    $\mathcal{O}(\tfrac{1}{{\eta} \epsilon^2})$ projection steps and
    $\mathcal{O}\left(\tfrac{1}{{\eta^2} \epsilon^4}\right)$ function
    evaluations. {These statements appear to be novel and there appear to be
    few results to contend with general nonsmooth, nonconvex, and stochastic
regimes via zeroth-order approaches.}

\end{abstract}

\section{Introduction}
 We consider the following stochastic optimization problem
\begin{align}\label{eqn:prob}
\begin{aligned}
    \min_{\x} & \quad f(\x) \triangleq  \mathbb{E}[{\f}(\x,\xi(\omega))] \\
    \st & \quad \x \in \Xscr\triangleq \textstyle\prod_{i=1}^b\Xscr_{i},
\end{aligned}
\end{align}
where $f:\mathbb{R}^n\to \mathbb{R}$ is a real-valued, nonsmooth, and nonconvex
function, $\Xscr_i \subseteq \mathbb{R}^{n_i} $ is a closed and convex set for 
$i=1,\ldots,b$ with $\sum_{i=1}^bn_i=n$, $\xi:\Omega \to \mathbb{R}^d$ denotes
a random variable associated with the probability space $(\Omega,
\mathcal{F},\mathbb{P})$. Throughout, we assume that $f$ is
$L_0$-Lipschitz continuous on the set $\Xscr$, i.e., there exists a scalar
$L_0>0$ such that for all $\x,\y \in \Xscr$ we have $|f(\x)-f(\y)| \leq
L_0\|\x-\y\|$. Further, at any $\x \in \mathbb{R}^n$,  $f(\x)\triangleq
\mathbb{E}[{\f}(\x,\omega)]$ where we refer to $f(\x,\xi(\omega))$ by
$f(\x,\omega)$.\\

While there is a significant body of literature on contending with nonsmooth
stochastic convex optimization problems~\cite{shapiro09lectures}, most
nonconvex generalizations are generally restricted to structured regimes where
the nonconvexity often emerges as  an expectation-valued
smooth function while the nonsmoothness arises in a deterministic form .
However, in many applications, $\tilde{f}(\bullet,\omega)$ may be
both nonconvex and nonsmooth and proximal stochastic gradient
schemes~\cite{ghadimi2016mini,lei2020asynchronous} cannot be directly adopted. We now discuss some relevant research in nonsmooth and nonconvex regimes. 

\medskip

\noindent {\em (a) Nonsmooth nonconvex optimization.}
In~\cite{burke02approximating}, Burke et al. disuss how gradient sampling
allow for approximating the Clarke subdifferential of a
function that is differentiable almost everywhere. 
A robust
{\em gradient sampling} scheme was subsequenly developed~\cite{burke05robust}
for functions that are continuously differentiable on an open subset $D
\subseteq \Real^n$; such functions need not be convex nor locally Lipschitz.
In~\cite{burke05robust}, the authors prove that a limit point of a subsequence
is an $\epsilon$-Clarke stationary point when $f$ is locally Lipschitz while
Kiwiel proved that every limit point is Clarke stationary with respect to $f$
without requiring compactness of level sets~\cite{kiwiel07convergence}. There
have also been efforts to develop statements in structured regimes where $f$ is either weakly convex~\cite{davis19stochastic,davis19proximally} or $f =
g+h$ and $h$ is smooth and possibly nonconvex while $g$ is convex, nonsmooth,
and proximable~\cite{xu2017globally,lei2020asynchronous}.

\smallskip    

\noindent {\em (b) Nonsmooth nonconvex stochastic optimization.} Much of the
efforts in the regime of stochastic nonconvex optimization have been restricted
to structured regimes where $f(\x) = h(\x) + g(\x)$, $h(\x) \triangleq
\mathbb{E}[F(\x,\omega)]$, $h$ is smooth and possibly nonconvex while $g$ is
closed, convex, and proper with an efficient proximal evaluation. In such
settings, proximal stochastic gradient techniques~\cite{ghadimi2016mini} and their
variance-reduced counterparts~\cite{ghadimi2016mini,lei2020asynchronous} were developed.    

\medskip    

\noindent {\em (c) Zeroth-order methods.} Deterministic~\cite{chen12smoothing}
and randomized smoothing~\cite{steklov1,ermoliev95minimization} have been the
basis for resolving a broad class of nonsmooth optimization
problems~\cite{mayne84,yousefian2012stochastic}. When the original function is
nonsmooth and nonconvex, Nesterov and Spokoiny ~\cite{nesterov17} examine
unconstrained nonsmooth and nonconvex optimization problems via Gaussian
smoothing. 

\smallskip

\noindent {\em Motivation.} Our work draws motivation from the recent work by
Zhang et al.~\cite{zhang20complexity} where the authors show that for a
suitable class of nonsmooth functions, computing an $\epsilon$-stationary point
is impossible in finite time. This negative result is a consequence of the
possibility that the gradient can change in an abrupt fashion, thereby
concealing a stationary point. To this end, they introduce a notion of
$(\delta,\epsilon)$-stationarity, a weakening of $\epsilon$-stationarity;
specifically, if $\x$ is $(\delta,\epsilon)$-stationary, then there exists a
convex combination of gradients in a $\delta$-neighborhood of  $\x$ that has
norm at most $\epsilon$. However, this does not mean that $\x$ is
$\delta-$close to an $\epsilon$-stationary point of $\x$ as noted by
Shamir~\cite{shamir21nearapproximatelystationary} since the convex hull might
contain a small vector without any of the vectors being necessarily small.
However, as Shamir observes that one needs to accept that the
$(\delta,\epsilon)$-stationarity notion may have such pathologies. Instead, an
alternative might lie in minimizing a smoothed function
$\mathbb{E}_{\bf u}[f(\x+\delta {\bf u})]$ where $u$ is a suitably defined random
variable. {This avenue allows for leveraging a richer set of techniques but may still be afflicted by similar challenges.}

\smallskip
\begin{tcolorbox}
{\em We consider a zeroth-order smoothing approach in constrained,
stochastic, and block-structured regimes with a view towards developing
finite-time and asymptotic guarantees.}
\end{tcolorbox}

\smallskip

\noindent {\em Contributions.} We develop a randomized zeroth-order framework
in regime where $f$ is block-structured and expectation-valued over $\Xscr$, a
Cartesian product of closed convex sets in which locally randomized
smoothing is carried out via spherical smoothing. This scheme leads to a
stochastic approximation framework in which the gradient estimator is
constructed via (sampled) function values. Succinctly, our main contributions
are captured as follows.      

\smallskip

\noindent (i) {\em Almost sure convergence guarantees.} {On applying the
randomized scheme to an $\eta$-smoothed problem, under suitable choices of the
steplength and mini-batch sequences, we show  that the norm of the residual function of the smoothed problem tends to zero almost surely. }

\smallskip 

\noindent (ii) {\em Rate and complexity guarantees.} It can be shown that the
expected squared residual (associated with the smoothed problem)
diminishes at the rate of $\mathcal{O}(1/k)$ in terms of projection steps on $\Xscr_i$, leading to a complexity of {$\mathcal{O}(\eta^{-2}\epsilon^{-4})$ in terms of sampled function evaluations.}

\smallskip {\bf Notation.} We use $\x$, $\x^T$, and $\|\x\|$ to denote a column vector,
its transpose, and its Euclidean norm, respectively. We use $\x^{(i)}$ to denote the $i$th block coordinate of vector
$\x=\left(\x^{(1)},\ldots,\x^{(b)}\right)$. Given a mapping $F:\mathbb{R}^n \to
\mathbb{R}^n$, $F_i$ denotes the $i$th block coordinate of $F$. We
define $f^*\triangleq \inf_{\x \in \Xscr} f(\x)$ and $f^*_\eta\triangleq
\inf_{\x \in \Xscr} f_\eta(\x)$, where $f_\eta(\x)$ denotes the smoothed
approximation of $f$. Given a continuous function, i.e., $f \in C^{0}$,
we write $f \in C^{0,0}(\Xscr)$ if $f$ is Lipschitz continuous on the
set $\Xscr$ with parameter $L_0$. Given a continuously differentiable
function, i.e., $f \in C^{1}$, we write $f \in C^{1,1}(\Xscr)$ if $\nabla f$ is
Lipschitz continuous on $\Xscr$ with parameter $L_1$. We
write a.s. for ``almost surely” and $\mathbb{E}[Z]$ denotes the
expectation of a random variable $Z$. {Given a scalar $u$, $[u]_+
\triangleq \max\{0,u\}$.}

\section{Stationarity and smoothing}
We first recap some concepts of Clarke's nonsmooth calculus~\cite{clarke98}
 that allow for providing stationarity conditions. We first define the directional derivative, a key object necessary in addressing nonsmooth optimization problems. 
 \begin{definition}[{\bf Directional derivatives and Clarke generalized gradient~\cite{clarke98}}] \em 
    The directional derivative of $h$ at $\x$ in a direction $v$ is defined as 
    \begin{align}
        h^{\circ}(\x,v) \triangleq  \limsup_{\y \to \x, t \downarrow 0} \left(\frac{h(\y+tv)-h(\y)}{t}\right).
\end{align}
The Clarke generalized gradient at $\x$ can then be defined as 
\begin{align}
    \partial h(\x) \triangleq  \left\{ \xi \in \Real^n \mid h^{\circ}(\x,v) \geq  \xi^T v, \quad \forall v \in \Real^n\right\}.
\end{align}
In other words, $h^{\circ}(\x,v) = \displaystyle \sup_{g \in \partial h(\x)}  g^Tv.$\qed
\end{definition}

\smallskip

 {If $h$ is $C^1$ at $\x$, the Clarke generalized gradient reduces to the standard gradient, i.e. $\partial h(\x) = \nabla_{\x} h(\x).$ If $\x$ is a local minimizer of $h$, then we have that $0 \in \partial h(\x)$. In fact, this claim can be extended to convex constrained regimes, i.e. if $\x$ is a local minimizer of 
        $\displaystyle \min_{\x \in \Xscr} \ h(\x)$,
then $\x$ satisfies $0 \in \partial h(\x) + \Nscr_{\Xscr}(\x)$, 
where $\Nscr_{\Xscr}(\x)$ denotes the normal cone of $\Xscr$ defined at
$\x$~\cite{clarke98}.} We now  review some properties of $\partial h(\x)$. In particular, if
$h$ is locally Lipschitz on an open set $\Cscr$ containing $\Xscr$, then $h$ is
differentiable almost everywhere on $\Cscr$ by Rademacher's
theorem~\cite{clarke98}. Suppose $\Cscr_h$ denotes the set of points where $h$
is not differentiable. We now provide some properties of the Clarke generalized
gradient.   \begin{proposition}[{\bf Properties of Clarke generalized
    gradients~\cite{clarke98}}] \em
     Suppose $h$ is $L_0$-Lipschitz continuous on $\Real^n$. Then the following hold.
\begin{enumerate}
    \item[(i)] $\partial h(\x)$ is a nonempty, convex, and compact  set and $\|g \| \leq {L_0}$ for any $g \in \partial h(\x)$. 
\item[(ii)] $h$ is differentiable almost everywhere. 
\item[(iii)] $\partial h(\x)$ is an upper semicontinuous map defined as 
    $$\hspace{-.5in}\partial h(\x) \triangleq \mbox{conv}\left\{g \mid g = \lim_{k \to \infty} \nabla_{\x} h(\x_k), \Cscr_h \not \owns \x_k \to \x\right\}.  
\qed $$
\end{enumerate}
\end{proposition}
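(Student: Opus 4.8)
The plan is to handle the three claims in order, since parts (ii) and (iii) both rest on the differentiability-almost-everywhere statement and on the sublinearity of $h^{\circ}(\x,\cdot)$ established while proving (i). Throughout, the one structural consequence I would extract from $L_0$-Lipschitz continuity is the difference-quotient bound: for all $\y,v$ and $t>0$, $|h(\y+tv)-h(\y)| \leq L_0 t\|v\|$, whence $|h^{\circ}(\x,v)| \leq L_0\|v\|$ for every $v$, so that $h^{\circ}(\x,\cdot)$ is finite-valued.

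For part (i), I would first verify that $v \mapsto h^{\circ}(\x,v)$ is sublinear: positive homogeneity is immediate from the definition, while subadditivity follows from the elementary inequality $\limsup(a_t+b_t) \leq \limsup a_t + \limsup b_t$ applied to the two difference quotients obtained by splitting $h(\y+t(v+w))-h(\y)$ through the intermediate point $\y+tv$. Given a finite sublinear function, the set $\{\xi : \xi^T v \leq h^{\circ}(\x,v) \ \forall v\}$ is precisely the subdifferential of $h^{\circ}(\x,\cdot)$ at the origin, and standard support-function theory then delivers at once that $\partial h(\x)$ is nonempty, convex, and compact, together with the identity $h^{\circ}(\x,v) = \sup_{g \in \partial h(\x)} g^T v$. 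The norm bound drops out by taking $v=g$ in $g^T v \leq h^{\circ}(\x,v) \leq L_0\|v\|$, which yields $\|g\|^2 \leq L_0\|g\|$.

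For part (ii), I would invoke Rademacher's theorem: a function that is locally Lipschitz on $\Real^n$ is differentiable Lebesgue-almost everywhere. If a self-contained argument were desired, I would sketch it by fixing a direction and using the one-dimensional fact that a Lipschitz function of a single variable is differentiable a.e., so that via Fubini directional derivatives exist a.e. along almost every line, and then upgrade pointwise directional differentiability to full differentiability a.e. through the measurability and a.e.-linearity of $v \mapsto h'(\x;v)$; for present purposes the citation to~\cite{clarke98} suffices.

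The crux is part (iii), and this is where I expect the real work. Having (ii), the set $G(\x) \triangleq \{g : g = \lim_k \nabla h(\x_k), \ \x_k \to \x, \ \x_k \notin \Cscr_h\}$ is well defined; since every $\nabla h(\x_k)$ satisfies $\|\nabla h(\x_k)\| \leq L_0$, the set $G(\x)$ is bounded, and a diagonal/limit argument shows it is closed and nonempty, so $\mbox{conv}\,G(\x)$ is compact. The substantive step is to show that its support function equals the Clarke directional derivative, i.e. $\sup_{g \in \mbox{conv}\,G(\x)} g^T v = h^{\circ}(\x,v)$ for all $v$; by part (i) this identifies $\mbox{conv}\,G(\x)$ with $\partial h(\x)$. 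The easy inclusion (``$\leq$'') uses continuity of the gradient along differentiability points together with the definition of $h^{\circ}$, whereas the reverse inclusion (``$\geq$'') is the genuine obstacle: it requires a mean-value/averaging argument that realizes the limsup defining $h^{\circ}(\x,v)$ by nearby gradients evaluated on the full-measure differentiability set, carefully excluding the null set $\Cscr_h$. Finally, upper semicontinuity of $\x \mapsto \partial h(\x)$ follows from the uniform bound $\|g\| \leq L_0$ (local boundedness) combined with the closed-graph property of $G(\cdot)$, which is built directly into its limit-of-gradients definition.
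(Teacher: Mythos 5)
The paper supplies no proof of this proposition: it is quoted verbatim from Clarke's monograph \cite{clarke98}, so there is no internal argument to compare against. Your outline follows the standard textbook development (sublinearity plus support-function theory for (i), Rademacher's theorem for (ii), and the gradient-limit characterization for (iii)) and is correct in structure; the only place where you assert rather than argue is the mean-value/Fubini step establishing $h^{\circ}(\x,v)\leq \sup_{g\in \mbox{conv}\,G(\x)}g^{T}v$ in (iii), which is precisely the content of Clarke's Theorem 2.5.1 and would need to be written out or cited for a self-contained proof.
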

\noindent We may also define the $\epsilon$-Clarke generalized gradient~\cite{goldstein77} 
\begin{align}
    \mbox{ as }
    \partial_{\epsilon} h(\x) \triangleq \mbox{conv}\left\{ \xi: \xi \in \partial h(\y), \|\x-\y\| \leq \epsilon\right\}.
\end{align}
When $f$ is nonsmooth and nonconvex on $\Xscr$, a closed and convex set, then 
to contend with nonsmoothness, we consider a locally randomized smoothing technique described as follows. Given a function $h:\mathbb{R}^n \to \mathbb{R}$ and a scalar $\eta>0$, a smoothed approximation of $h$ is denoted by $h_\eta$ defined as 
\begin{align}\label{def-smooth}
h_\eta (\x) \triangleq \mathbb{E}_{u \in \mathbb{B}}[h(\x+\eta u)].
\end{align} where $\mathbb{B}\triangleq \{u \in \mathbb{R}^n\mid \|u\|\leq 1\}$
{denotes} the unit ball and $u$ is uniformly distributed over $\mathbb{B}$.
{We now recall some properties} of spherical smoothing. Throughout, {$\mathbb{S}\triangleq \{v \in \mathbb{R}^n \mid
\|v\|=1\}$ denotes} the surface of $\mathbb{B}$ and $\Xscr_{\eta} \triangleq \Xscr+\eta \mathbb{B}$ represents the Minkowski sum of $\Xscr$ and $\eta \mathbb{B}$.

\begin{lemma}[{\bf Properties of spherical smoothing (cf. Lemma 1 in ~\cite{CSY2021MPEC})}] \label{lemma:props_local_smoothing}\em Suppose $h:\mathbb{R}^n \to \mathbb{R}$ is {a continuous function and {its smoothed counterpart $h_{\eta}$ is defined as \eqref{def-smooth}}, where $\eta>0$ is a given scalar}.  Then the following hold.

    \noindent (i) $h_{\eta}$ is {$C^1$} over $\Xscr$ {and} 
    \begin{align}
        \nabla_{\x} h_{\eta}(\x) = \left(\tfrac{n}{\eta}\right) \mathbb{E}_{v \in \eta \mathbb{S}} \left[h(\x+v) \tfrac{v}{\|v\|}\right] \quad \forall \x \in \Xscr. 
    \end{align}
    {Suppose $h \in C^{0,0}(\Xscr_{\eta})$ with parameter $L_0$.} For any $\x, \y \in \Xscr$, we have that (ii) -- (iv) hold. 

\begin{enumerate}
    \item[ (ii)] $| h_{\eta}(\x)-h_{\eta}(\y) | \leq L_0 \|\x-\y\|.$ 

    \item[ (iii)] $| h_{\eta}(\x) - h(\x)| \leq L_0 \eta.$ 

    \item[ (iv)] $\| \nabla_{\x} h_{\eta}(\x) -\nabla_{\x} h_{\eta}(\y)\| \leq 
\tfrac{L_0n}{\eta}\|\x-\y\|.$ 
        

 
    \item [(v)] {Suppose $h \in C^{0,0}(\Xscr_{\eta})$ with parameter $L_0$ {and} 
\begin{align*}
g_{\eta}(\x,v) \triangleq \left(\frac{n}{\eta}\right) \frac{(h(\x+v) - h(\x))v}{\|v\|}.
\end{align*}
 for $v \in \eta\mathbb{S}$.  Then, for} any $\x \in \Xscr$, {we have that} ${\mathbb{E}_{v \in \eta \mathbb{S}}}[\|{g_\eta}(\x,v)\|^2] \leq L_0^2 n^2$. \qed 
\end{enumerate}

\end{lemma}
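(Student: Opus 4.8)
The plan is to treat part (i) as the analytical core and to derive (ii)--(v) as comparatively direct consequences of Lipschitz continuity together with the gradient representation established in (i). First I would rewrite the smoothing as a normalized volume integral: with the change of variables $w = \eta u$ followed by $z = \x + w$, one obtains $h_\eta(\x) = \tfrac{1}{\eta^n \mathrm{vol}(\mathbb{B})}\int_{\mathbb{B}(\x,\eta)} h(z)\,dz$, i.e. the average of $h$ over the ball of radius $\eta$ centered at $\x$. This convolution form is the object from which both the $C^1$ property and the sphere-integral gradient formula will be extracted.

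For part (i), the key identity is that differentiating a moving-domain integral of this type converts a volume integral into a boundary (sphere) integral. Concretely, writing $H(\x) = \int_{\eta\mathbb{B}} h(\x+w)\,dw$ and applying the divergence theorem to the field $w \mapsto h(\x+w)e_j$ yields $\nabla_\x H(\x) = \int_{\eta\mathbb{S}} h(\x+v)\,\nu(v)\,dS(v)$, where $\nu(v) = v/\|v\|$ is the outward unit normal on $\eta\mathbb{S}$. Since $h$ is only assumed continuous, the divergence-theorem step cannot be applied to $h$ directly; I would instead mollify $h$ to a smooth $h_\varepsilon$, apply the identity to $h_\varepsilon$, and pass to the limit using local uniform convergence $h_\varepsilon \to h$ (valid by continuity of $h$) to obtain both the formula and the continuity of $\nabla_\x H$. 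Normalizing by $\eta^n \mathrm{vol}(\mathbb{B})$ and using $\mathrm{vol}(\mathbb{B}) = \mathrm{area}(\mathbb{S})/n$ together with $\mathrm{area}(\eta\mathbb{S}) = \eta^{n-1}\mathrm{area}(\mathbb{S})$ collapses the constants to $n/\eta$, giving the claimed expression for $\nabla_\x h_\eta(\x)$; continuity of this sphere-average in $\x$ then certifies $h_\eta \in C^1$ over $\Xscr$. This is where I expect essentially all the difficulty to reside.

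The remaining parts are short. For (ii), Jensen's inequality gives $|h_\eta(\x)-h_\eta(\y)| \le \mathbb{E}_{u\in\mathbb{B}}|h(\x+\eta u)-h(\y+\eta u)|$, and since $\x+\eta u,\, \y+\eta u \in \Xscr_\eta$ the $L_0$-Lipschitz bound yields the result. For (iii), the same Jensen step with $|h(\x+\eta u)-h(\x)| \le L_0\|\eta u\| \le L_0\eta$ (using $\|u\|\le 1$) gives the approximation bound. For (iv), I would subtract the gradient representations at $\x$ and $\y$ obtained in (i): the common factor $v/\|v\|$ has unit norm, so $\|\nabla_\x h_\eta(\x)-\nabla_\x h_\eta(\y)\| \le \tfrac{n}{\eta}\mathbb{E}_{v\in\eta\mathbb{S}}|h(\x+v)-h(\y+v)| \le \tfrac{L_0 n}{\eta}\|\x-\y\|$, again invoking Lipschitz continuity on $\Xscr_\eta$. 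Finally, for (v), since $\|v\| = \eta$ on $\eta\mathbb{S}$ one computes $\|g_\eta(\x,v)\|^2 = \tfrac{n^2}{\eta^2}|h(\x+v)-h(\x)|^2$, and $|h(\x+v)-h(\x)| \le L_0\|v\| = L_0\eta$ gives $\|g_\eta(\x,v)\|^2 \le L_0^2 n^2$ pointwise in $v$; taking the expectation over $\eta\mathbb{S}$ preserves the bound. The only genuine obstacle is the rigorous justification of the volume-to-surface gradient identity for merely continuous $h$, which the mollification argument handles.
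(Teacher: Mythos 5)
Your proof is correct, and it is in fact the standard argument underlying this result: the paper itself provides no proof of the lemma (it is imported verbatim as Lemma~1 of the cited companion reference), so your write-up is compared against the approach of that source rather than an in-paper argument, and it matches it. The essential content is exactly where you placed it: rewriting $h_\eta$ as the normalized ball average, converting the gradient of the moving-domain integral into a surface integral via the divergence theorem (with mollification correctly supplying the rigor for merely continuous $h$), and collapsing the constants through $\mathrm{vol}(\mathbb{B})=\mathrm{area}(\mathbb{S})/n$ to obtain the factor $n/\eta$; parts (ii)--(v) then follow from the Lipschitz bound on $\Xscr_\eta$ precisely as you argue, with (v) holding pointwise in $v$ since $\|v\|=\eta$ on $\eta\mathbb{S}$.
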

{We restrict our attention to functions $f$ that are $L_0$-Lipschitz continuous over $\Xscr_{\eta_0} \triangleq \Xscr + \eta_0 \mathbb{B}$. Further, we assume that $\f(\x,\xi) - f(\x)$ admits suitable bias and moment properties.} 
We intend to develop schemes for computing approximate stationary points of \eqref{eqn:prob} by an iterative scheme. However, we first need to formalizing the relationship between the original problem and its smoothed counterpart.  This is provided in~\cite{CSY2021MPEC} by leveraging results from ~\cite{mayne84,mordukhovich09variational}.  
\begin{proposition}[{\bf Stationarity of $h_{\eta}$ and $\eta$-stationarity}]\label{prop_equiv} \em 
        Consider \eqref{eqn:prob} where $f$ is a locally Lipschitz continuous function and $\Xscr$ is a closed, convex, and bounded set in $\Real^n$. 


        \noindent (i) For any $\eta > 0$ and any $\x \in \Real^n$, $\nabla f_{\eta}(\x) \in \partial_{2\eta} f(\x)$. Furthermore, if $0 \not \in \partial f(\x)$, then there exists an $\eta$ such that $\nabla_{\x} f_{\tilde \eta} (\x) \neq 0$ for $\tilde{\eta} \in (0,\eta]$.    

        \noindent (ii) For any $\eta > 0$ and any $\x \in \Xscr$, 
        \begin{align}
        0 \in \nabla_{\x} f_{\eta}(\x) + \mathcal{N}_{\Xscr}(\x) \Rightarrow 0 \in \partial_{2\eta} f(\x)+ \mathcal{N}_{\Xscr}(\x). \hspace{-0.1in}\qed
        \end{align}
    \end{proposition}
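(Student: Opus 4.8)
The plan is to establish part~(i) — specifically the inclusion $\nabla f_{\eta}(\x) \in \partial_{2\eta} f(\x)$ — and then to recover the remaining claims with little additional work. The starting point is to rewrite the smoothed function as a mollification of $f$. Writing $V \triangleq \mathrm{vol}(\mathbb{B})$ and changing variables $w = \eta u$ in \eqref{def-smooth},
\begin{align*}
f_{\eta}(\x) = \tfrac{1}{V}\int_{\mathbb{B}} f(\x+\eta u)\, du = \tfrac{1}{\eta^n V}\int_{\x+\eta\mathbb{B}} f(w)\, dw.
\end{align*}
Since $f$ is $L_0$-Lipschitz on $\Xscr_{\eta_0}$, it lies in $W^{1,\infty}_{\mathrm{loc}}$ and its weak gradient coincides a.e.\ with the classical gradient $\nabla f$ guaranteed by Rademacher's theorem (Proposition~1(ii)). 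Differentiating the mollification under the integral sign — equivalently, convolving the weak gradient with the absolutely continuous uniform kernel, so that the measure-zero nondifferentiability set is immaterial — yields
\begin{align*}
\nabla f_{\eta}(\x) = \tfrac{1}{V}\int_{\mathbb{B}} \nabla f(\x+\eta u)\, du = \mathbb{E}_{u \in \mathbb{B}}\!\left[\nabla f(\x+\eta u)\right].
\end{align*}
(An application of the divergence theorem recovers the equivalent surface form in Lemma~\ref{lemma:props_local_smoothing}(i), confirming consistency.)

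Next I would use this representation to place $\nabla f_\eta(\x)$ in the Goldstein subdifferential. For a.e.\ $u \in \mathbb{B}$ the gradient exists and satisfies $\nabla f(\x+\eta u) \in \partial f(\x+\eta u)$, with $\|\eta u\| \le \eta$. Hence every integrand lies in $\{\xi : \xi \in \partial f(\y),\ \|\y - \x\| \le \eta\}$, whose convex hull is exactly $\partial_{\eta} f(\x)$. By Proposition~1 this hull is nonempty, convex, and compact (the selections are uniformly bounded by $L_0$, and $\partial f$ is upper semicontinuous over the compact ball $\x + \eta \mathbb{B}$). Since the expectation of a measurable selection taking values a.e.\ in a fixed compact convex set lies in that set, we conclude $\nabla f_{\eta}(\x) \in \partial_{\eta} f(\x) \subseteq \partial_{2\eta} f(\x)$, which is the claimed inclusion (indeed slightly sharper).

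For the nonvanishing assertion, suppose $0 \notin \partial f(\x)$. Compactness of $\partial f(\x)$ gives $\delta \triangleq \mathrm{dist}(0, \partial f(\x)) > 0$. Upper semicontinuity of $\partial f$ (Proposition~1(iii)) provides an $\epsilon > 0$ with $\partial f(\y) \subseteq \partial f(\x) + \tfrac{\delta}{2}\mathbb{B}$ for all $\|\y - \x\| \le \epsilon$; taking convex hulls (the right-hand side is convex) gives $\partial_{\epsilon} f(\x) \subseteq \partial f(\x) + \tfrac{\delta}{2}\mathbb{B}$, a set excluding $0$. Setting $\eta \triangleq \epsilon/2$, then for any $\tilde\eta \in (0,\eta]$ we have $2\tilde\eta \le \epsilon$, so by monotonicity $\partial_{2\tilde\eta} f(\x) \subseteq \partial_{\epsilon} f(\x)$, and by the inclusion already proved $\nabla f_{\tilde\eta}(\x) \in \partial_{2\tilde\eta} f(\x) \subseteq \partial_{\epsilon} f(\x) \not\ni 0$, whence $\nabla f_{\tilde\eta}(\x) \neq 0$.

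Finally, part~(ii) is immediate from the inclusion in part~(i): if $0 \in \nabla_{\x} f_{\eta}(\x) + \mathcal{N}_{\Xscr}(\x)$, then $-\nabla f_{\eta}(\x) \in \mathcal{N}_{\Xscr}(\x)$, and since $\nabla f_{\eta}(\x) \in \partial_{2\eta} f(\x)$, the pair $\big(\nabla f_{\eta}(\x),\, -\nabla f_{\eta}(\x)\big) \in \partial_{2\eta} f(\x) \times \mathcal{N}_{\Xscr}(\x)$ sums to zero, giving $0 \in \partial_{2\eta} f(\x) + \mathcal{N}_{\Xscr}(\x)$. The one genuinely delicate step is the gradient representation: interchanging differentiation and expectation for a merely Lipschitz (hence only a.e.\ differentiable) $f$ must be justified through the weak-derivative/mollifier argument above, and one must confirm that the uniform measure on $\mathbb{B}$ charges no mass to the nondifferentiability set — this is where I expect the main care to be needed, the rest being convexity and upper-semicontinuity bookkeeping.
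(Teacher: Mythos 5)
Your argument is correct, and it is worth noting that the paper itself does not prove this proposition: it is stated with a pointer to Proposition-level results in the cited works (Cui--Shanbhag--Yousefian and, ultimately, Mayne--Polak and Mordukhovich), so there is no in-paper proof to diverge from. Your route --- rewriting $f_\eta$ as a mollification, passing the derivative onto the weak gradient (legitimate because the uniform kernel is absolutely continuous and Rademacher's theorem makes the nondifferentiability set Lebesgue-null), and then observing that $\nabla f_\eta(\x)=\mathbb{E}_{u\in\mathbb{B}}[\nabla f(\x+\eta u)]$ is an average of selections from $\partial f(\y)$ with $\|\y-\x\|\le\eta$, hence lies in the compact convex set $\partial_\eta f(\x)$ --- is the standard one behind such statements, and each step is justified: $\partial_\eta f(\x)$ is indeed compact (closed graph plus local boundedness of $\partial f$ over the compact ball, then Carath\'eodory for the hull), and the expectation of a selection valued in a compact convex set stays in that set. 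You in fact obtain the sharper inclusion $\nabla f_\eta(\x)\in\partial_\eta f(\x)\subseteq\partial_{2\eta}f(\x)$; the paper's $2\eta$ is the weaker statement (the factor $2$ is what one needs if the smoothing distribution lives on a radius-$\eta$ sphere around points that are themselves perturbed, which is not the case here). The nonvanishing claim via upper semicontinuity of $\partial f$ and the trivial derivation of (ii) from (i) are both fine. The only presentational caveat: the interchange of differentiation and expectation is exactly the content of Lemma~\ref{lemma:props_local_smoothing}(i) in its surface form, so you could alternatively cite that lemma and use the divergence theorem to pass to the volume form, rather than re-deriving differentiability from scratch.
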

    {Intuitively, this means that if $\x$ is a stationary point of the
        $\eta$-smoothed problem, then $\x$ is a $2\eta$-Clarke stationary point of the
        original problem.} {Next, we introduce a residual function that
        captures the departure from stationarity. Recall that when $h$ is a
        differentiable but possibly nonconvex function and $\Xscr$ is a closed
        and convex set, then  $\x$ is a stationary point of \eqref{eqn:prob}
        if and only if 
    $$ G_{\beta}(\x) \triangleq \beta\left( \x - \Pi_{\Xscr}\left[\x - \tfrac{1}{\beta} \nabla_{\x} f(\x) \right]\right) = 0. $$
When $f$ is not necessarily smooth as is the case in this paper, a residual of
the smoothed problem can be derived by replacing $\nabla_{\x} f(\x)$ by
$\nabla_{\x} f_{\eta}(\x)$. In particular, the residual $G_{\eta,\beta}$
denotes the stationarity residual with parameter $\beta$ of the $\eta$-smoothed
problem while $\tilde{G}_{\eta,\beta}$ represents its counterpart arising from
using a sampling-based estimate of $\nabla_{\x} f_{\eta}(\x)$. }  \begin{definition}[{\bf The residual
mapping}]\label{def:res_maps}
  {Suppose Assumption~\ref{ass-1} holds.} Given $\beta>0$ and a smoothing parameter $\eta>0$, for any $\x \in \mathbb{R}^n$ and $\tilde e \in \mathbb{R}^n$ an arbitrary given vector,
  let the residual mappings $G_{\eta,\beta}(\x)$ and $\tilde{G}_{\eta,\beta}(\x, {\tilde e})$ be defined as 
\begin{align}
    G_{\eta,\beta}(\x) &\triangleq \beta {\left(\x - \Pi_{\Xscr}\left[\x - \tfrac{1}{\beta} \nabla_x {f_{\eta}(\x)}\right]\right)} \mbox{ and }\\
    \hspace{-0.05in}\tilde{G}_{\eta,\beta}(\x, {\tilde e}) &\triangleq \beta\left( \x - \Pi_{\Xscr}\left[\x - \tfrac{1}{\beta} ({\nabla_x f_{\eta}(\x)}+\tilde{e}) \right]\right). \hspace{-0.1in} \qed
\end{align}
\end{definition}
{Unsurprisingly, one can derive a bound on $\tilde{G}_{\eta,\beta}(\x, {\tilde e})$ in terms of $G_{\eta,\beta}(\x)$ and $\tilde{e}$, as shown next.}
 \begin{lemma}[Lemma 10 in~\cite{CSY2021MPEC}]\label{lem:inexact_proj_2}\em
    Let Assumption~\ref{ass-1} hold. Then the following holds for any $\beta>0$, $\eta>0$, and $\x\in\mathbb{R}^n$.
\begin{align*}
    \|G_{\eta,\beta}(\x) \|^2 &  \leq  {2} \| \tilde{G}_{\eta,\beta}(\x, {\tilde e}) \|^2 + 2 \|\tilde{e}\|^2  .
\end{align*}
\end{lemma}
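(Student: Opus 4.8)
The plan is to exploit the single structural feature that distinguishes $G_{\eta,\beta}$ from $\tilde{G}_{\eta,\beta}$: both are built from the same projection operator $\Pi_{\Xscr}$ applied to arguments that differ only by the perturbation $\tilde e$, scaled by $1/\beta$. Writing $g \triangleq \nabla_x f_{\eta}(\x)$ to lighten notation, I would first form the difference of the two residual mappings and observe that the two leading $\beta\x$ terms cancel, leaving
\begin{align*}
G_{\eta,\beta}(\x) - \tilde{G}_{\eta,\beta}(\x,\tilde e) = \beta\left(\Pi_{\Xscr}\!\left[\x - \tfrac{1}{\beta}(g+\tilde e)\right] - \Pi_{\Xscr}\!\left[\x - \tfrac{1}{\beta}g\right]\right).
\end{align*}

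The next step is to invoke the nonexpansiveness of the Euclidean projection onto the closed convex set $\Xscr$, namely $\|\Pi_{\Xscr}[a] - \Pi_{\Xscr}[b]\| \le \|a - b\|$. Applying this with $a = \x - \tfrac{1}{\beta}(g+\tilde e)$ and $b = \x - \tfrac{1}{\beta}g$, the arguments differ precisely by $\tfrac{1}{\beta}\tilde e$, so the bracketed difference has norm at most $\tfrac{1}{\beta}\|\tilde e\|$. The crucial cancellation is that the outer factor $\beta$ multiplies the inner bound $\tfrac{1}{\beta}\|\tilde e\|$, yielding the clean estimate $\|G_{\eta,\beta}(\x) - \tilde{G}_{\eta,\beta}(\x,\tilde e)\| \le \|\tilde e\|$, with the $\beta$'s disappearing entirely. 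This is the only genuinely load-bearing inequality in the argument.

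Finally, I would close using the elementary quadratic bound $\|a\|^2 \le 2\|b\|^2 + 2\|a-b\|^2$, valid for any vectors (it follows from $a = b + (a-b)$ together with $\|u+w\|^2 \le 2\|u\|^2 + 2\|w\|^2$). Setting $a = G_{\eta,\beta}(\x)$ and $b = \tilde{G}_{\eta,\beta}(\x,\tilde e)$ gives
\begin{align*}
\|G_{\eta,\beta}(\x)\|^2 \le 2\|\tilde{G}_{\eta,\beta}(\x,\tilde e)\|^2 + 2\|G_{\eta,\beta}(\x) - \tilde{G}_{\eta,\beta}(\x,\tilde e)\|^2,
\end{align*}
and substituting the bound $\|G_{\eta,\beta}(\x) - \tilde{G}_{\eta,\beta}(\x,\tilde e)\| \le \|\tilde e\|$ from the previous step produces exactly the claimed inequality.

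Honestly, there is no serious obstacle here; the result is a short computation. The only point requiring a moment of care is the bookkeeping of the scaling factors, making sure the outer $\beta$ and the inner $1/\beta$ cancel so that the final bound on $\|G_{\eta,\beta}(\x) - \tilde{G}_{\eta,\beta}(\x,\tilde e)\|$ is $\beta$-independent; everything else is the nonexpansiveness of projection plus the standard Young-type splitting inequality. Assumption~\ref{ass-1} is invoked only to guarantee that $\nabla_x f_{\eta}(\x)$ is well defined (which is already ensured by Lemma~\ref{lemma:props_local_smoothing}(i)), so it plays no active role in the estimate itself.
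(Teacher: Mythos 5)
Your proof is correct. Note that the paper itself gives no proof of this statement---it is imported verbatim as Lemma~10 of~\cite{CSY2021MPEC}---so there is nothing to compare against beyond observing that your argument (cancel the $\beta\x$ terms, apply nonexpansiveness of $\Pi_{\Xscr}$ so the outer $\beta$ and inner $1/\beta$ cancel to give $\|G_{\eta,\beta}(\x)-\tilde{G}_{\eta,\beta}(\x,\tilde e)\|\le\|\tilde e\|$, then split with $\|a\|^2\le 2\|b\|^2+2\|a-b\|^2$) is the standard and almost certainly intended one. One small correction: Assumption~\ref{ass-1} is not merely decorative here, since its part (ii) (each $\Xscr_i$ nonempty, closed, convex) is exactly what makes $\Pi_{\Xscr}$ single-valued and nonexpansive, which is the load-bearing step of your argument.
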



\section{A randomized zeroth-order algorithm} 
In this section, we provide the main assumptions, outline the proposed algorithm, and derive some preliminary results that will be utilized in the convergence analysis. 
\begin{assumption}[{\bf Problem properties}] 
\label{ass-1}\em
\noindent Consider problem \eqref{eqn:prob}.

\noindent (i) $f$ is $L_0$-Lipschitz on $\Xscr + \eta_0 \mathbb{B}$ for some $\eta_0 > 0$. 
    
\noindent (ii) $\Xscr_i \subseteq \Real^n$ is a nonempty, closed, and convex set for $i=1,\ldots,b$.

\noindent (iii) For all $\x \in \Xscr_{\eta}+ \eta_0 \mathbb{S}$ we have $\mathbb{E}[\f(\x,\omega) | \x] =f(\x)$.

\noindent (iv) For all $\x \in \Xscr_{\eta}+ \eta_0 \mathbb{S}$ we have $\mathbb{E}[\parallel \f(\x,\omega)- f(\x)\parallel^2 | \x] \leq \nu^2$ for some $\nu > 0$.
\end{assumption}
We introduce a variance-reduced randomized block-coordinate zeroth-order scheme presented by Algorithm \ref{algorithm:zo_nonconvex}.
 \begin{algorithm*}[hbt]
    \caption{\texttt{VR-RB-ZO}: Variance reduced projected randomized block zeroth-order method}\label{algorithm:zo_nonconvex}
 {   \begin{algorithmic}[1]
         \STATE\textbf{input:}  Given $\x_0 \in \Xscr$, stepsize $\gamma>0$, smoothing parameter $\eta>0$, mini-batch sequence $\{N_k\}$, and an integer $R$ randomly selected from $\{\lceil\lambda K\rceil ,\ldots,K\}$ using a {discrete uniform} distribution
    \FOR {$k=0,1,\ldots,{K}-1$}
    \STATE (i) Generate a random {mini-}{batch $v_{j,k} \in \eta \mathbb{S}$ for $j=1,\ldots,N_k$}  
    \STATE (ii) Generate a {discrete uniform} random variable $i_k$ from $\{1,\ldots,b\}$
    \STATE (iii) Compute a {mini-batch of zeroth-order gradient estimates for $j = 1, \ldots, N_k$} as 
                                             $$g_{\eta}(\x_k,v_{j,k},\omega_{j,k},i_k) :=\tfrac{n\left({\f}(\x_k+ v_{j,k},\omega_{j,k}) - {\f}(\x_k,\omega_{j,k})\right)v_{j,k}^{(i_k)}}{\left\|v_{j,k}\right\|\eta}$$ 

                                             \STATE (iv) Evaluate the mini-batch inexact zeroth-order gradient. $
               g_{\eta,N_k,i_k}(\x_k) = \frac{\sum_{j=1}^{N_k}  g_{\eta}(\x_k,v_{j,k},\omega_{j,k},i_k)}{N_k}$
               \STATE (v) {Update $\x_k$ as follows.}
$\x_{k+1}^{(i)}:=
		\begin{cases}
		\Pi_{\Xscr_{i}}\left[ \x_{k}^{(i)}-\gamma g_{\eta,N_k,i}(\x_k) \right],& \text{ if } i = i_k, \\
		\x_k^{(i)}, & \text{ if } i \neq i_k. 
		\end{cases}$
    \ENDFOR
        \STATE Return $\x_R$ 
\end{algorithmic}}
\end{algorithm*}
To this end, we define a zeroth-order gradient estimate of  $\f(\x_k, \omega_{j,k})$ as follows.
\begin{align*}
    g_{\eta}(\x_k,v_{j,k},\omega_{j,k}) \triangleq \frac{n({\f}(\x_k+ v_{j,k},\omega_{j,k}) - {\f}(\x_k,\omega_{j,k}))v_{j,k}}{\left\|v_{j,k}\right\|\eta} .
\end{align*}
{Intuitively, $g_{\eta}(\x_k,v_{j,k},\omega_{j,k})$ generates a gradient estimate by employing sampled function evaluations $\tilde{f}(\x_k,\omega_{j,k})$ and $\tilde{f}(\x_k+v_{j,k},\omega_{j,k})$; in short, the zeroth-order oracle, given an $\x_k$ and a perturbation vector $v_{j,k}$, produces two evaluations, i.e.   
$\tilde{f}(\x_k,\omega_{j,k})$ and $\tilde{f}(\x_k+v_{j,k},\omega_{j,k})$.}
We then employ a mini-batch approximation of this gradient estimate within a block coordinate (BC) structure. 
We formally define the stochastic errors emergent from the randomized BC scheme.
\begin{definition}\label{def:stoch_errors} 
For all $k\geq 0$ and $j=1,\ldots,N_k$ we define 
\begin{align}
&e_{j,k} \triangleq \nabla {\f}_\eta(\x_k,\omega_{j,k})-\nabla f_\eta(\x_k),\label{def:stoch_errors1}\\
&\theta_{j,k }\triangleq g_{\eta}(\x_k,v_{j,k},\omega_{j,k}) -\nabla {\f}_\eta(\x_k,\omega_{j,k}),\label{def:stoch_errors2} \\
&\delta_{j,k} \triangleq  b \mathbf{U}_{i_k}g_{\eta}(\x_k,v_{j,k},\omega_{j,k},i_k) -  g_{\eta}(\x_k,v_{j,k},\omega_{j,k}),\label{def:stoch_errors3}
\end{align}
where $e_k\triangleq \tfrac{\sum_{j=1}^{N_k}e_{j,k}}{N_k}$, $\theta_k\triangleq \tfrac{\sum_{j=1}^{N_k}\theta_{j,k}}{N_k}$, $\delta_k\triangleq \tfrac{\sum_{j=1}^{N_k}\delta_{j,k}}{N_k}$, and we define $\mathbf{U}_\ell \in \mathbb{R}^{n\times n_\ell}$ for $\ell \in \{1,\ldots,b\}$ such that $\left[\mathbf{U}_1, \ldots,\mathbf{U}_b\right] =\mathbf{I}_n $ and $\mathbf{I}_n$ denotes the $n\times n$ identity matrix. 
\end{definition}
The history of Algorithm \ref{algorithm:zo_nonconvex} at iteration $k$ is defined as
\begin{align*}
&\mathcal{F}_k \triangleq \cup_{t=0}^{k-1}\left(\{i_t\}\cup\left(\cup_{j=1}^{N_t}\{\omega_{j,t}, v_{j,t}\}\right)\right), \qquad \hbox{for } k \geq 1.
\end{align*} 
 We impose the following independence requirement on $\omega_{j,k}, v_{j,k}$, and $i_k$. Recall that both $v_{j,k}$ and $i_k$ are user-defined so this is a mild requirement.
\begin{assumption}[{\bf Independence} ]\label{assum:random_vars}
    Random samples $\omega_{j,k}$, $v_{j,k}$, and $i_k$ are generated independent of each other for all $k\geq 0$ and $1\leq j \leq N_k$.
\end{assumption}
We now analyze the bias and moment properties of three crucial error sequences. 
\begin{lemma}[{\bf Bias and moment properties of $e, \delta, $ and $\theta$}]\label{lem:stoch_error_var}\em
    Consider Definition \ref{def:stoch_errors}. Let Assumptions \ref{ass-1} and \ref{assum:random_vars} hold. The following hold almost surely for $k\geq 0$ and $N_k \geq 1$.

    \noindent (i) $\mathbb{E}[e_{j,k}\mid \mathcal{F}_k] =\mathbb{E}[\theta_{j,k}\mid \mathcal{F}_k]=\mathbb{E}[\delta_{j,k}\mid \mathcal{F}_k]=0$ {almost surely} for all $j=1,\ldots,N_k$.

    \noindent (ii) $\mathbb{E}[\|e_k\|^2\mid \mathcal{F}_k] \leq \tfrac{n^2\nu^2}{\eta^2N_k}$, $\mathbb{E}[\|\theta_k\|^2\mid \mathcal{F}_k] \leq \tfrac{L_0^2n^2}{N_k}$, and $\mathbb{E}[\|\delta_k\|^2\mid \mathcal{F}_k] \leq  \tfrac{3n^2(b-1)}{N_k}\left(\tfrac{\nu^2}{\eta^2}+ L_0^2+\left(\tfrac{ \hat f}{\eta}\right)^2 \right),$ {almost surely} where $ \hat f \triangleq  \sup_{\x \in \Xscr+\eta\mathbb{S}} f(x)$.
\end{lemma}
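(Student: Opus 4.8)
The plan is to handle both parts by conditioning on the history $\mathcal{F}_k$ and isolating, for each error sequence, the single source of randomness it is centered against, using the independence in Assumption~\ref{assum:random_vars}. For part (i) I would establish unbiasedness term by term. For $e_{j,k}$, condition on $\mathcal{F}_k$ and integrate over $\omega_{j,k}$: the smoothing integral in Lemma~\ref{lemma:props_local_smoothing}(i) and the expectation over $\omega_{j,k}$ commute by Fubini, and Assumption~\ref{ass-1}(iii) gives $\mathbb{E}[\tilde{f}(\x_k+v,\omega_{j,k})\mid\mathcal{F}_k]=f(\x_k+v)$ pointwise in $v$, so $\mathbb{E}[\nabla\tilde{f}_\eta(\x_k,\omega_{j,k})\mid\mathcal{F}_k]=\nabla f_\eta(\x_k)$ and $\mathbb{E}[e_{j,k}\mid\mathcal{F}_k]=0$. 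For $\theta_{j,k}$, condition on $(\mathcal{F}_k,\omega_{j,k})$ and integrate over $v_{j,k}$; the term $\tilde{f}(\x_k,\omega_{j,k})\tfrac{v}{\|v\|}$ vanishes by symmetry of the sphere and Lemma~\ref{lemma:props_local_smoothing}(i) identifies the remainder as $\nabla\tilde{f}_\eta(\x_k,\omega_{j,k})$, so $\mathbb{E}[\theta_{j,k}\mid\mathcal{F}_k,\omega_{j,k}]=0$ and the tower property gives $\mathbb{E}[\theta_{j,k}\mid\mathcal{F}_k]=0$. For $\delta_{j,k}$, condition on $(\mathcal{F}_k,v_{j,k},\omega_{j,k})$ and integrate over $i_k$, using the partition identity $[\mathbf{U}_1,\ldots,\mathbf{U}_b]=\mathbf{I}_n$, which yields $\mathbb{E}_{i_k}[b\mathbf{U}_{i_k}\mathbf{U}_{i_k}^\top]=\sum_{i=1}^b\mathbf{U}_i\mathbf{U}_i^\top=\mathbf{I}_n$; since $\mathbf{U}_{i_k}g_\eta(\x_k,v_{j,k},\omega_{j,k},i_k)=\mathbf{U}_{i_k}\mathbf{U}_{i_k}^\top g_\eta(\x_k,v_{j,k},\omega_{j,k})$, this gives $\mathbb{E}[\delta_{j,k}\mid\mathcal{F}_k]=0$.

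For part (ii), the terms $e_k$ and $\theta_k$ are built from samples that are independent across $j$, so part (i) lets me discard the cross terms and reduce $\mathbb{E}[\|\cdot_k\|^2\mid\mathcal{F}_k]$ to $\tfrac{1}{N_k^2}\sum_{j=1}^{N_k}\mathbb{E}[\|\cdot_{j,k}\|^2\mid\mathcal{F}_k]$, which produces the $1/N_k$ factor. For $e_k$, I would apply Jensen to pull the norm inside the smoothing expectation, bounding the per-sample term by $\tfrac{n^2}{\eta^2}\mathbb{E}_{v}\mathbb{E}[|\tilde{f}(\x_k+v,\omega_{j,k})-f(\x_k+v)|^2\mid\mathcal{F}_k]$, and invoke Assumption~\ref{ass-1}(iv) to obtain $n^2\nu^2/\eta^2$. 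For $\theta_k$, since $\theta_{j,k}$ is $g_\eta$ minus its conditional mean over $v_{j,k}$, I would bound its conditional variance by its second moment and apply Lemma~\ref{lemma:props_local_smoothing}(v) with $h=\tilde{f}(\cdot,\omega_{j,k})$ to get $L_0^2 n^2$.

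The delicate term is $\delta_k$, and I expect it to be the main obstacle for two reasons. First, the magnitude bound: because $\tilde{f}(\cdot,\omega)$ is not assumed Lipschitz (only $f$ is, via Assumption~\ref{ass-1}(i)), I would bound $\mathbb{E}[\|g_\eta(\x_k,v_{j,k},\omega_{j,k})\|^2\mid\mathcal{F}_k]=\tfrac{n^2}{\eta^2}\mathbb{E}[|\tilde{f}(\x_k+v_{j,k},\omega_{j,k})-\tilde{f}(\x_k,\omega_{j,k})|^2\mid\mathcal{F}_k]$ by splitting the increment as $[\tilde{f}(\x_k+v,\omega)-f(\x_k+v)]+[f(\x_k+v)-f(\x_k)]+[f(\x_k)-\tilde{f}(\x_k,\omega)]$ and using $(a+b+c)^2\le 3(a^2+b^2+c^2)$; Assumption~\ref{ass-1}(iv) controls the noise terms, Assumption~\ref{ass-1}(i) bounds the middle term by $L_0^2\eta^2$, and the uniform bound $\hat{f}$ accounts for the boundedness contribution, yielding the constants $\nu^2/\eta^2$, $L_0^2$, and $(\hat{f}/\eta)^2$. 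Combined with the block-projection identity $\mathbb{E}_{i_k}\|(b\mathbf{U}_{i_k}\mathbf{U}_{i_k}^\top-\mathbf{I}_n)w\|^2=(b-1)\|w\|^2$, this gives the per-sample factor $3n^2(b-1)(\cdots)$. Second, and more seriously, the block index $i_k$ is shared across the entire mini-batch, so the $\delta_{j,k}$ are not independent across $j$ and the clean cross-term cancellation used for $e_k$ and $\theta_k$ does not directly apply; recovering the stated $1/N_k$ scaling requires a careful conditional-variance decomposition that separates the $i_k$-randomness from the per-sample randomness, and I expect this coupling to be the crux of the argument.
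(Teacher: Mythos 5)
Your treatment of part (i) and of the $e_k$ and $\theta_k$ bounds in part (ii) tracks the paper's proof essentially step for step: tower property plus Assumption~\ref{ass-1}(iii) for $e_{j,k}$, spherical symmetry and the gradient identity of Lemma~\ref{lemma:props_local_smoothing}(i) for $\theta_{j,k}$, the partition identity $\sum_{i=1}^b\mathbf{U}_i\mathbf{U}_i^T=\mathbf{I}_n$ for $\delta_{j,k}$, and Jensen/Assumption~\ref{ass-1}(iv) and Lemma~\ref{lemma:props_local_smoothing}(v) for the per-sample second moments, with the $1/N_k$ factor coming from conditional uncorrelatedness across $j$. (Two small remarks: the paper obtains the per-sample bound on $\|g_\eta(\x_k,v_{j,k},\omega_{j,k})\|^2$ by writing $g_\eta=e_{j,k}+\theta_{j,k}+\nabla f_\eta(\x_k)$ and bounding $\|\nabla f_\eta(\x_k)\|\leq n\hat f/\eta$, which is where the $\hat f$ in the stated constant originates; your three-way split of the function increment yields $2\nu^2/\eta^2+L_0^2$ with no $\hat f$ term at all, which is consistent and in fact tighter. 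Also, both you and the paper apply Lemma~\ref{lemma:props_local_smoothing}(v) with $h=\f(\cdot,\omega)$, which tacitly requires $\f(\cdot,\omega)$ itself, not just $f$, to be $L_0$-Lipschitz; this is a shared, not a new, liberty.)

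The coupling you flag for $\delta_k$ is a genuine obstruction, and you are right not to wave it away: the paper's own proof passes from $\mathbb{E}[\|\sum_j\delta_{j,k}\|^2\mid\mathcal{F}_k]$ to $\sum_j\mathbb{E}[\|\delta_{j,k}\|^2\mid\mathcal{F}_k]$ citing only $\mathbb{E}[\delta_{j,k}\mid\mathcal{F}_k]=0$, which is exactly the unjustified cancellation you anticipated. Concretely, write $\delta_{j,k}=M_{i_k}w_j$ with $M_i\triangleq b\mathbf{U}_i\mathbf{U}_i^T-\mathbf{I}_n$ and $w_j\triangleq g_\eta(\x_k,v_{j,k},\omega_{j,k})$. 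Then $M_i^2=(b^2-2b)\mathbf{U}_i\mathbf{U}_i^T+\mathbf{I}_n$, so $\mathbb{E}_{i_k}[M_{i_k}^2]=(b-1)\mathbf{I}_n$, and for $j\neq j'$, using the independence of $w_j$, $w_{j'}$, and $i_k$ together with $\mathbb{E}[w_j\mid\mathcal{F}_k]=\nabla f_\eta(\x_k)$, one gets $\mathbb{E}[\delta_{j,k}^T\delta_{j',k}\mid\mathcal{F}_k]=(b-1)\|\nabla f_\eta(\x_k)\|^2$, which is nonzero whenever $b\geq 2$ and $\x_k$ is not a stationary point of $f_\eta$. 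Hence $\mathbb{E}[\|\delta_k\|^2\mid\mathcal{F}_k]$ carries an additional term $\tfrac{N_k-1}{N_k}(b-1)\|\nabla f_\eta(\x_k)\|^2$ that does not decay with $N_k$, and the stated $\mathcal{O}(1/N_k)$ bound does not follow from the argument given---yours or the paper's. So your proposal does not have a gap the paper fills; it has correctly located a gap the paper glosses over. To actually obtain the stated bound one would need to draw an independent block index $i_{j,k}$ for each sample $j$ (restoring conditional independence of the $\delta_{j,k}$), or else carry the extra $(b-1)\|\nabla f_\eta(\x_k)\|^2$ term (bounded by $(b-1)L_0^2$ via Lemma~\ref{lemma:props_local_smoothing}(ii)) through the downstream analysis.
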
 
\begin{proof}
    \noindent (i) To show that $\mathbb{E}[e_{j,k}\mid \mathcal{F}_k] =0$ a.s., we may write
\begin{align*}
&\mathbb{E}[e_{j,k}\mid \mathcal{F}_k] = \mathbb{E}[ \nabla \f_\eta(\x_k,\omega_{j,k})-\nabla f_\eta(\x_k)\mid \mathcal{F}_k]\\
&\stackrel{\tiny{\hbox{Lemma \ref{lemma:props_local_smoothing}}}}{=}  \tfrac{n}{\eta}\mathbb{E}\left[ \mathbb{E}\left[\tfrac{\left(\f(\x_k+v,\omega_{j,k})-f(\x_k+v)\right)v}{\|v\|} \mid \mathcal{F}_k \cup\{{\omega}_{j,k}\}\right]\mid \mathcal{F}_k\right]\\
&\stackrel{\tiny{\hbox{Assum. \ref{assum:random_vars}}}}{=}
\tfrac{n}{\eta}\mathbb{E}\left[\tfrac{\mathbb{E}\left[ \f(\x_k+v,{\omega})-f(\x_k+v)\mid \mathcal{F}_k\cup\{v\}\right]v}{\|v\|} \mid \mathcal{F}_k\right]\\
&\stackrel{\tiny{\hbox{Assum. \ref{ass-1}}}}{=}
\tfrac{n}{\eta}\mathbb{E}\left[ 0\times \tfrac{v}{\|v\|} \mid \mathcal{F}_k\right]=0,\qquad  \hbox{a.s.} 
\end{align*}
To show that $\mathbb{E}[\theta_{j,k}\mid \mathcal{F}_k] =0$ a.s., we write
 \begin{align*}
&\mathbb{E}[\theta_{j,k}\mid \mathcal{F}_k] = \mathbb{E}[ g_{\eta}(\x_k,v_{j,k},\omega_{j,k}) -\nabla \f_\eta(\x_k,\omega_{j,k})\mid \mathcal{F}_k] \\
& =\mathbb{E}\left[\tfrac{n\left(\f(\x_k+ v_{j,k},\omega_{j,k}) - \f(\x_k,\omega_{j,k})\right)v_{j,k}}{\left\|v_{j,k}\right\|\eta}\right. \\ 
&\left. -\tfrac{n}{\eta}\mathbb{E}\left[\tfrac{\f(\x_k+v,\omega_{j,k})v}{\|v\|} { \mid \mathcal{F}_k\cup\{\omega_{k,j}\}}\right]\mid \mathcal{F}_k\right]\\
&\stackrel{\tiny{\hbox{Assum. \ref{assum:random_vars}}}}{=}\mathbb{E}\left[\mathbb{E}\left[\tfrac{n\left(\f(\x_k+ v_{j,k},\omega_{j,k}) - \f(\x_k,\omega_{j,k})\right)v_{j,k}}{\left\|v_{j,k}\right\|\eta}\mid \mathcal{F}_k\cup\{v_{j,k}\}\right] \right.\\ 
&\left. -\tfrac{n}{\eta}{\mathbb{E}}\left[\f(\x_k+v,\omega_{j,k})\tfrac{v}{\|v\|}\mid {\mathcal{F}_k\cup\{v\}}\right] {\mid \mathcal{F}_k} \right]\\
& \stackrel{\tiny{\hbox{Assum. \ref{ass-1}}}}{=}{\mathbb{E}\left[\tfrac{n\left(f(\x_k+ v_{j,k}) - f(\x_k)\right)v_{j,k}}{\left\|v_{j,k}\right\|\eta} -\tfrac{n}{\eta}f(\x_k+v)\tfrac{v}{\|v\|}\mid \mathcal{F}_k\right]}\\
& =\tfrac{n}{\eta}\mathbb{E}_v\left[f(\x_k+v)\tfrac{v}{\|v\|}-f(\x_k+v)\tfrac{v}{\|v\|}\mid \x_k\right]=0, \qquad \hbox{ a.s.}  
\end{align*}
To show $\mathbb{E}[\delta_{j,k}\mid \mathcal{F}_k] =0$ a.s., we have {almost surely} 
 \begin{align*}
&\mathbb{E}[\delta_{j,k}\mid \mathcal{F}_k] \\
&= \mathbb{E}[  b \mathbf{U}_{i_k}g_{\eta}(\x_k,v_{j,k},\omega_{j,k},i_k) -  g_{\eta}(\x_k,v_{j,k},\omega_{j,k})\mid \mathcal{F}_k]\\
&\stackrel{\tiny{\hbox{Assum. \ref{assum:random_vars}}}}{=}
 \mathbb{E}\left[  \mathbb{E}\left[ b \mathbf{U}_{i_k}g_{\eta}(\x_k,v_{j,k},\omega_{j,k},i_k)\right. \right.\\ 
&\left. \left. -  g_{\eta}(\x_k,v_{j,k},\omega_{j,k})\mid \mathcal{F}_k\cup\{\omega_{j,k},v_{j,k}\}\right] { \mid \mathcal{F}_k} \right]\\
 &= \mathbb{E}\left[  \textstyle\sum_{i=1}^b bb^{-1} \mathbf{U}_{i}g_{\eta}(\x_k,v_{j,k},\omega_{j,k},i) -  g_{\eta}(\x_k,v_{j,k},\omega_{j,k})\right. \\
 &\left. \mid \mathcal{F}_k\right]= \mathbb{E}\left[ g_{\eta}(\x_k,v_{j,k},\omega_{j,k}) -  g_{\eta}(\x_k,v_{j,k},\omega_{j,k})\mid \mathcal{F}_k\right]=0.
\end{align*}

\noindent  (ii) To show the first inequality, we may bound the conditional second moment of $e_{j,k}$ as follows.
\begingroup
\allowdisplaybreaks
\begin{align*}
&\mathbb{E}[\|e_{j,k}\|^2\mid \mathcal{F}_k] = \mathbb{E}\left[\left\| \nabla \f_\eta(\x_k,\omega_{j,k})-\nabla f_\eta(\x_k)\right\|^2\mid \mathcal{F}_k\right]\\
&\stackrel{\tiny{\hbox{Lemma \ref{lemma:props_local_smoothing}}}}{=}
\tfrac{n^2}{\eta^2}\mathbb{E}\left[ \left\| \mathbb{E}\left[\f(\x_k+v_{j,k},\omega_{j,k})-f(\x_k+v_{j,k})\right.\right.\right.\\
&\left.\left.\left. { \mid \mathcal{F}_k \cup \{\omega_{k,j}\}}\right]\right\|^2\mid \mathcal{F}_k\right]
 \stackrel{\tiny{\hbox{Jensen's ineq.}}}{\leq}\tfrac{n^2}{\eta^2}\mathbb{E}\left[ \mathbb{E}\left[\left\| \f(\x_k+v_{j,k},\omega_{j,k})\right.\right.\right.\\
&\left.\left.\left.-f(\x_k+v_{j,k}) \right\|^2{ \mid \mathcal{F}_k \cup \{\omega_{k,j}\}} \right] \mid \mathcal{F}_k\right]\\
  &\stackrel{\tiny{\hbox{Assumption \ref{ass-1}}}}{\leq}\tfrac{n^2}{\eta^2}\mathbb{E}\left[ \nu^2\mid \mathcal{F}_k\right] = \tfrac{n^2\nu^2}{\eta^2}, \qquad \hbox{a.s.}
\end{align*}
\endgroup
From $\mathbb{E}[e_{j,k}\mid \mathcal{F}_k] =0$ a.s., Assumption \ref{assum:random_vars}, and Definition \ref{def:stoch_errors}, 
\begin{align*}
&\mathbb{E}[\|e_k\|^2\mid \mathcal{F}_k] =\tfrac{1}{N_k^2} \mathbb{E}\left[\left\|\textstyle\sum_{j=1}^{N_k}e_{j,k}\right\|^2\mid \mathcal{F}_k\right]\\
&=\tfrac{1}{N_k^2} \mathbb{E}\left[\textstyle\sum_{j=1}^{N_k}\left\|e_{j,k}\right\|^2\mid \mathcal{F}_k\right] = \tfrac{1}{N_k^2}\textstyle\sum_{j=1}^{N_k} \mathbb{E}\left[\left\|e_{j,k}\right\|^2\mid \mathcal{F}_k\right].
\end{align*}
Combining the preceding two relations, we obtain $\mathbb{E}[\|e_k\|^2\mid \mathcal{F}_k] \leq \tfrac{n^2\nu^2}{\eta^2N_k}$ a.s. To show the second inequality, we have
\begin{align*}
 &\mathbb{E}\left[\| 
\theta_{j,k}\|^2\mid \mathcal{F}_k\right] \\
 & =\mathbb{E}\left[\left\|g_{\eta}(\x_k,v_{j,k},\omega_{j,k}) -\nabla f_\eta(\x_k,\omega_{j,k}) \right\|^2 \mid \mathcal{F}_k\right]\\
&\stackrel{\tiny{\hbox{Assumption \ref{assum:random_vars}}}}{=}
\mathbb{E}\left[\mathbb{E}\left[\left\|g_{\eta}(\x_k,v_{j,k},\omega_{j,k}) -\nabla f_\eta(\x_k,\omega_{j,k}) \right\|^2 \right. \right.\\ 
&\left. \left.\mid \mathcal{F}_k\cup\{\omega_{j,k}\}\right] {\mid \mathcal{F}_k}\right]\\
&\stackrel{\tiny{\hbox{Lemma \ref{lemma:props_local_smoothing}\hbox{ (viii)}}}}{\leq}
\mathbb{E}\left[L_0^2n^2  \mid \mathcal{F}_k\right]=L_0^2n^2,\qquad \hbox{a.s.}
\end{align*}  
Similar to the proof of the first bound, we have that $\mathbb{E}[\|\theta_k\|^2\mid \mathcal{F}_k] \leq \tfrac{L_0^2n^2}{N_k}$ a.s. To {derive} the third bound, we {may express} 
\begingroup
\allowdisplaybreaks
\begin{align*}
 & \mathbb{E}\left[\| 
\delta_{j,k}\|^2\mid \mathcal{F}_k\right]  \\
&=\mathbb{E}\left[\left\|b \mathbf{U}_{i_k}g_{\eta}(\x_k,v_{j,k},\omega_{j,k},i_k) -  g_{\eta}(\x_k,v_{j,k},\omega_{j,k}) \right\|^2 \mid \mathcal{F}_k\right]\\
&\stackrel{\tiny{\hbox{Assum. \ref{assum:random_vars}}}}{=}
 \mathbb{E}\left[  \mathbb{E}\left[ \left\|b \mathbf{U}_{i_k}g_{\eta}(\x_k,v_{j,k},\omega_{j,k},i_k) -  g_{\eta}(\x_k,v_{j,k},\omega_{j,k})\right\|^2\right. \right.\\ 
&\left. \left. \mid \mathcal{F}_k\cup\{\omega_{j,k},v_{j,k}\}\right] { \mid \mathcal{F}_k} \right]\\
  &= \mathbb{E}\left[ \sum_{i=1}^b b^{-1}\left\|b \mathbf{U}_{i_k}g_{\eta}(\x_k,v_{j,k},\omega_{j,k},i) -  g_{\eta}(\x_k,v_{j,k},\omega_{j,k})\right\|^2\right. \\ 
&\left. \mid \mathcal{F}_k\right]= \mathbb{E}\left[ \textstyle\sum_{i=1}^b b\left\| \mathbf{U}_{i_k}g_{\eta}(\x_k,v_{j,k},\omega_{j,k},i)\right\|^2 \right. \\
 &\left. +\left\| g_{\eta}(\x_k,v_{j,k},\omega_{j,k})\right\|^2\right.\\
&\left. -2 g_{\eta}(\x_k,v_{j,k},\omega_{j,k})^T\textstyle\sum_{i=1}^b\mathbf{U}_{i_k}g_{\eta}(\x_k,v_{j,k},\omega_{j,k},i)\mid \mathcal{F}_k\right] \\
 &= \mathbb{E}\left[ \textstyle\sum_{i=1}^b b\left\|g_{\eta}(\x_k,v_{j,k},\omega_{j,k},i)\right\|^2 -\left\| g_{\eta}(\x_k,v_{j,k},\omega_{j,k})\right\|^2\right. \\ 
&\left.  \mid \mathcal{F}_k\right]  = (b-1)\mathbb{E}\left[ \left\| g_{\eta}(\x_k,v_{j,k},\omega_{j,k})\right\|^2 \mid \mathcal{F}_k\right].
\end{align*} 
\endgroup
We can also write 
\begingroup
\allowdisplaybreaks
\begin{align*}
 &  \mathbb{E}\left[ \left\| g_{\eta}(\x_k,v_{j,k},\omega_{j,k})\right\|^2 \mid \mathcal{F}_k\right]\\
& = \mathbb{E}\left[ \left\|e_{j,k }+\theta_{j,k }+\nabla f_\eta(\x_k)\right\|^2 \mid \mathcal{F}_k\right] \\
& \leq 3\mathbb{E}\left[ \left\|e_{j,k }\right\|^2  + \left\|\theta_{j,k }\right\|^2+ \left\|\nabla f_\eta(\x_k)\right\|^2 \mid \mathcal{F}_k\right].
\end{align*} 
Note that from Lemma \ref{lemma:props_local_smoothing} we have
\begin{align*}
&\mathbb{E}\left[ \left\|\nabla f_\eta(\x_k)\right\|^2 \mid \mathcal{F}_k\right]\\
 & \overset{\tiny \mbox{Jensen's}}{\leq} {\left(\tfrac{n}{\eta}\right)^2 \left\| \mathbb{E}\left[ f(\x_k+v)\mid \mathcal{F}_k\right] \right\|^2}\leq \left(\tfrac{n \hat f}{\eta}\right)^2, \qquad { \mbox{ a.s.} } \end{align*}
\endgroup
where the second inequality follows from noting that $\sup_{\x \in \Xscr + \eta \mathbb{S}} f(\x) \leq \hat{f}$. From $\mathbb{E}[\delta_{j,k}\mid \mathcal{F}_k] =0$ {a.s.}, Assumption \ref{assum:random_vars}, and the definition of $\delta_k$, we have
\begin{align*}
&\mathbb{E}[\|\delta_k\|^2\mid \mathcal{F}_k] =\tfrac{1}{N_k^2} \mathbb{E}\left[\left\|\textstyle\sum_{j=1}^{N_k}\delta_{j,k}\right\|^2\mid \mathcal{F}_k\right]\\
&=\tfrac{1}{N_k^2} \mathbb{E}\left[\textstyle\sum_{j=1}^{N_k}\left\|\delta_{j,k}\right\|^2\mid \mathcal{F}_k\right] = \tfrac{1}{N_k^2}\textstyle\sum_{j=1}^{N_k} \mathbb{E}\left[\left\|\delta_{j,k}\right\|^2\mid \mathcal{F}_k\right].
\end{align*}
From the preceding four inequalities we obtain 
\begin{align*}
&\mathbb{E}[\|\delta_k\|^2\mid \mathcal{F}_k]\leq 3(1-b) \left(\tfrac{1}{N_k^2}\textstyle\sum_{j=1}^{N_k}\mathbb{E}\left[ \left\|e_{j,k }\right\|^2 \mid \mathcal{F}_k\right] \right. \\ 
&\left. + \tfrac{1}{N_k^2}\textstyle\sum_{j=1}^{N_k}\mathbb{E}\left[ \left\|\theta_{j,k }\right\|^2 \mid \mathcal{F}_k\right]+  \tfrac{1}{N_k^2}\textstyle\sum_{j=1}^{N_k}\left(\tfrac{n \hat f}{\eta}\right)^2\right)\\
&\leq 3(b-1)\left(\tfrac{n^2\nu^2}{\eta^2N_k}+\tfrac{L_0^2n^2}{N_k}+\left(\tfrac{n \hat f}{\eta}\right)^2\tfrac{1}{N_k}\right) \\
&= \tfrac{3n^2(b-1)}{N_k}\left(\tfrac{\nu^2}{\eta^2}+ L_0^2+\left(\tfrac{ \hat f}{\eta}\right)^2 \right),\qquad \hbox{a.s.}
\end{align*}
\end{proof}

\section{Convergence and rate analysis}
{In this section, we analyze the rate and complexity guarantees for the proposed zeroth-order randomized scheme.} The reader may note that in the {proposed} block-coordinate framework, the $i$th block is randomly selected. If this randomly selected block is denoted by $i_k$ at the $k$th epoch, then a projected (inexact and smoothed) gradient step is taken with respect to this block, i.e.
$$ \x^{(i)}_{k+1} := \Pi_{\Xscr_i} \left[\x^{(i)}_k - \gamma g_{\eta,N_k,i}(\x_k) \right], \mbox{ where } i = i_k$$
while $\x^{(j)}_{k+1} = \x^{(j)}_k$ for $j \neq i_k$. The next Lemma shows that this step can be recast as a projected gradient step for the entire vector $\x$ with respect to the set $\Xscr$. Similar results have been proven in~\cite{Dang15,FarzadSetValued18,KaushikYousefianSIOPT2021}. 
\begin{lemma}
Consider Definition \ref{def:stoch_errors} and Algorithm \ref{algorithm:zo_nonconvex}. Then the update rule of $\x_k$ can be compactly characterized as 
\begin{align*}
\x_{k+1} = \Pi_\Xscr\left[\x_k-b^{-1}\gamma\left(\nabla f_\eta(\x_k)+e_k+\theta_k+\delta_k\right)\right].
\end{align*}
\begin{proof}
Invoking the definition of $\mathbf{U}_{i_k} $ and the Cartesian structure of $\Xscr$, the update rule of $\x_k$ in Algorithm \ref{algorithm:zo_nonconvex} can be written as 
\begin{align}\label{eqn:proof_comapct_eq1}
\x_{k+1} = \Pi_\Xscr\left[\x_k-\gamma  \mathbf{U}_{i_k}  g_{\eta,N_k,i_k}(\x_k) \right].
\end{align}
Summing relations \eqref{def:stoch_errors1}, \eqref{def:stoch_errors2}, and \eqref{def:stoch_errors3} we have for all $k\geq 0$ and $j=1,\ldots,N_k$ that 
\begin{align*}
e_{j,k}+\theta_{j,k }+\delta_{j,k} =b \mathbf{U}_{i_k}g_{\eta}(\x_k,v_{j,k},\omega_{j,k},i_k) - \nabla f_\eta(\x_k).
\end{align*}
Summing over $j$ and dividing by $N_k$, we obtain for $k\geq 0$
\begin{align*}
&e_k+\theta_k+\delta_k = b \mathbf{U}_{i_k} \frac{\sum_{j=1}^{N_k}  g_{\eta}(\x_k,v_{j,k},\omega_{j,k},i_k)}{N_k}-\nabla f_\eta(\x_k) \\
&= b \mathbf{U}_{i_k}  g_{\eta,N_k,i_k}(\x_k) -\nabla f_\eta(\x_k).
\end{align*}
From this relation and \eqref{eqn:proof_comapct_eq1}, we obtain the result.
\end{proof}
\end{lemma}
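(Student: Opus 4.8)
The plan is to prove the identity in two stages: first I would recast the single-block update of Algorithm~\ref{algorithm:zo_nonconvex} as one projection onto the full product set $\Xscr$, and then I would rewrite the embedded block gradient in terms of the error averages $e_k$, $\theta_k$, and $\delta_k$ of Definition~\ref{def:stoch_errors}.

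For the first stage, I would use the Cartesian structure $\Xscr = \prod_{i=1}^b \Xscr_i$, under which the Euclidean projection separates across blocks, i.e. $(\Pi_\Xscr[\mathbf{z}])^{(i)} = \Pi_{\Xscr_i}[\mathbf{z}^{(i)}]$ for every $\mathbf{z} \in \mathbb{R}^n$. Since $\mathbf{U}_{i_k}$ embeds the $i_k$-th block into $\mathbb{R}^n$ and vanishes on the remaining blocks, the point $\x_k - \gamma \mathbf{U}_{i_k} g_{\eta,N_k,i_k}(\x_k)$ coincides with $\x_k$ on every block $i \neq i_k$; as $\x_k^{(i)} \in \Xscr_i$ is already feasible, those blocks are fixed by $\Pi_{\Xscr_i}$, while block $i_k$ reproduces exactly the projected step in part (v) of the algorithm. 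This shows that the block update is equivalent to the full-space relation
\begin{align*}
\x_{k+1} = \Pi_\Xscr\left[\x_k - \gamma \mathbf{U}_{i_k} g_{\eta,N_k,i_k}(\x_k)\right].
\end{align*}

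For the second stage, I would add the three defining relations~\eqref{def:stoch_errors1}, \eqref{def:stoch_errors2}, and \eqref{def:stoch_errors3}: the terms $\nabla \f_\eta(\x_k,\omega_{j,k})$ and $g_\eta(\x_k,v_{j,k},\omega_{j,k})$ each appear with opposite signs and cancel, leaving the telescoped identity $e_{j,k} + \theta_{j,k} + \delta_{j,k} = b\,\mathbf{U}_{i_k} g_\eta(\x_k,v_{j,k},\omega_{j,k},i_k) - \nabla f_\eta(\x_k)$ for each $j$. Averaging over $j = 1,\ldots,N_k$ and using the mini-batch definitions of $e_k,\theta_k,\delta_k$ together with $g_{\eta,N_k,i_k}(\x_k) = N_k^{-1}\sum_{j=1}^{N_k} g_\eta(\x_k,v_{j,k},\omega_{j,k},i_k)$ from part (iv), I would obtain $e_k + \theta_k + \delta_k = b\,\mathbf{U}_{i_k} g_{\eta,N_k,i_k}(\x_k) - \nabla f_\eta(\x_k)$, and hence $\mathbf{U}_{i_k} g_{\eta,N_k,i_k}(\x_k) = b^{-1}\big(\nabla f_\eta(\x_k) + e_k + \theta_k + \delta_k\big)$. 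Substituting this into the full-space relation above yields the claimed compact form.

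The argument is essentially algebraic bookkeeping, so I expect the only genuinely conceptual point to be the first stage, namely justifying that a projection onto the single factor $\Xscr_{i_k}$ (with the other coordinates merely copied) equals a projection onto all of $\Xscr$. The two facts that make this work are the separability of the projection over the Cartesian factors and the feasibility of $\x_k$, which guarantees that the untouched blocks are fixed points of their own projections; once these are noted, the telescoping and averaging steps are routine.
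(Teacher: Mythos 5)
Your proposal is correct and follows essentially the same route as the paper: recast the block update as a single projection onto $\Xscr$ via the separability of the projection over the Cartesian factors, then sum the three error definitions so that the intermediate terms telescope, average over the mini-batch, and substitute. The only difference is that you make explicit the (true but unstated in the paper) observation that the untouched blocks are fixed points of their own projections because $\x_k$ is feasible.
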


Next, we derive a bound on the residual $\|G_{\eta, b/\gamma}(\x_k)\|^2$.
\begin{lemma}
Let Assumption \ref{ass-1} hold. Consider Definition \eqref{def:stoch_errors}. Suppose $\x_k$ is generated by Algorithm \ref{algorithm:zo_nonconvex} where $\gamma \in (0,\tfrac{b\eta}{nL_0})$ for $\eta>0$. Then for all $k\geq 0$ we have 
     \begin{align}\label{eqn:recursive_ineq_lemma5} \notag
& \left( 1-\tfrac{nL_0\gamma}{b\eta}\right) \tfrac{\gamma}{4b} \|G_{\eta,b/\gamma} (\x_k)\|^2   \leq   f_{\eta}(\x_k)-      f_{\eta}(\x_{k+1}) \\
&+{\left( 1-\tfrac{nL_0\gamma}{2b\eta}\right)} \tfrac{\gamma}{b} \|e_k+\theta_k+\delta_k\|^2.
    \end{align}
\end{lemma}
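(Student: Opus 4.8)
The plan is to treat this as a standard inexact projected-gradient descent estimate for the smooth surrogate $f_\eta$, exploiting that $f_\eta$ is $C^{1,1}$ over $\Xscr$. Write $L_1 \triangleq \tfrac{nL_0}{\eta}$ for the Lipschitz constant of $\nabla f_\eta$ supplied by Lemma~\ref{lemma:props_local_smoothing}(iv), set $\beta \triangleq b/\gamma$ (so that $\tfrac{1}{\beta} = \tfrac{\gamma}{b}$ and $\tfrac{L_1}{\beta} = \tfrac{nL_0\gamma}{b\eta}$), and abbreviate $\tilde e_k \triangleq e_k + \theta_k + \delta_k$. By the compact characterization established in the preceding lemma, $\x_{k+1} = \Pi_\Xscr[\x_k - \tfrac{1}{\beta}(\nabla f_\eta(\x_k) + \tilde e_k)]$, which by Definition~\ref{def:res_maps} means precisely $\x_{k+1} = \x_k - \tfrac{1}{\beta}\tilde G_{\eta,\beta}(\x_k,\tilde e_k)$. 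I will write $\tilde G \triangleq \tilde G_{\eta,\beta}(\x_k,\tilde e_k)$ and $G \triangleq G_{\eta,\beta}(\x_k)$ throughout.

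First I would invoke the descent lemma for the $L_1$-smooth function $f_\eta$ along the segment $[\x_k,\x_{k+1}] \subseteq \Xscr$ (convexity of $\Xscr$ guarantees the segment lies in $\Xscr$), giving $f_\eta(\x_{k+1}) \le f_\eta(\x_k) - \tfrac{1}{\beta}\nabla f_\eta(\x_k)^T\tilde G + \tfrac{L_1}{2\beta^2}\|\tilde G\|^2$. The key structural input is a lower bound on the first-order term, obtained from the variational characterization of the projection: testing $(\x_{k+1} - z)^T(\x - \x_{k+1}) \ge 0$ (with $z = \x_k - \tfrac1\beta(\nabla f_\eta(\x_k)+\tilde e_k)$) at the feasible point $\x = \x_k$ and using $\x_k - \x_{k+1} = \tfrac1\beta \tilde G$ yields $(\nabla f_\eta(\x_k)+\tilde e_k)^T\tilde G \ge \|\tilde G\|^2$, hence $\nabla f_\eta(\x_k)^T\tilde G \ge \|\tilde G\|^2 - \tilde e_k^T\tilde G$. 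Substituting this into the descent inequality and applying Young's inequality in the symmetric form $\tilde e_k^T\tilde G \le \tfrac12\|\tilde e_k\|^2 + \tfrac12\|\tilde G\|^2$ to the residual cross-term, I obtain
\begin{align*}
f_\eta(\x_k) - f_\eta(\x_{k+1}) \ \ge\ \tfrac{1}{2\beta}\left(1 - \tfrac{L_1}{\beta}\right)\|\tilde G\|^2 - \tfrac{1}{2\beta}\|\tilde e_k\|^2.
\end{align*}

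The last step converts the inexact residual $\tilde G$ into the exact residual $G$ via Lemma~\ref{lem:inexact_proj_2}, which gives $\|\tilde G\|^2 \ge \tfrac12\|G\|^2 - \|\tilde e_k\|^2$. Because the hypothesis $\gamma \in (0,\tfrac{b\eta}{nL_0})$ forces $\tfrac{L_1}{\beta} = \tfrac{nL_0\gamma}{b\eta} < 1$, the factor $1 - \tfrac{L_1}{\beta}$ is strictly positive and the inequality is preserved under this substitution. Collecting the $\|G\|^2$ coefficient produces exactly $\tfrac{1}{4\beta}(1-\tfrac{L_1}{\beta}) = (1-\tfrac{nL_0\gamma}{b\eta})\tfrac{\gamma}{4b}$, while the two error contributions combine as $-\tfrac{1}{2\beta}(1-\tfrac{L_1}{\beta})\|\tilde e_k\|^2 - \tfrac{1}{2\beta}\|\tilde e_k\|^2 = -\tfrac{1}{\beta}(1-\tfrac{L_1}{2\beta})\|\tilde e_k\|^2 = -(1-\tfrac{nL_0\gamma}{2b\eta})\tfrac{\gamma}{b}\|\tilde e_k\|^2$, which is precisely the claimed bound after rearrangement. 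The part requiring the most care is the bookkeeping in this final step: the specific $\tfrac12$/$\tfrac12$ split in Young's inequality, together with the factor $2$ appearing in Lemma~\ref{lem:inexact_proj_2}, must align so that the residual coefficient collapses cleanly to $(1-\tfrac{nL_0\gamma}{2b\eta})\tfrac{\gamma}{b}$ rather than leaving a spurious $(1-\tfrac{L_1}{\beta})$-dependence; I expect the main obstacle to be verifying this coefficient cancellation, since a different Young parameter would not reproduce the stated constants.
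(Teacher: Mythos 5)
Your proof is correct and follows essentially the same route as the paper: descent lemma for the $L_{1}$-smooth surrogate $f_\eta$, the projection variational inequality to bound the first-order term, Young's inequality with the symmetric split on the error cross-term, and Lemma~\ref{lem:inexact_proj_2} to pass from $\tilde{G}_{\eta,\beta}$ to $G_{\eta,\beta}$; your coefficient bookkeeping $\tfrac{1}{2\beta}\bigl(1-\tfrac{L_1}{\beta}\bigr)+\tfrac{1}{2\beta}=\tfrac{1}{\beta}\bigl(1-\tfrac{L_1}{2\beta}\bigr)$ matches the paper's exactly. The only cosmetic difference is that you work with $\tilde{G}$ throughout via $\x_{k+1}-\x_k=-\tfrac{1}{\beta}\tilde{G}$, whereas the paper keeps $\|\x_{k+1}-\x_k\|^2$ until the final conversion.
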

\begin{proof}
From Lemma \ref{lemma:props_local_smoothing} (iv), the gradient mapping $\nabla f_\eta$ is Lipschitz continuous with the parameter $L_{\eta}\triangleq \tfrac{nL_0}{\eta}$. From the descent lemma, 
\begin{align}\label{eqn:descent_lem_proof_eq1}
&f_{\eta}(\x_{k+1}) \leq f_{\eta}(\x_{k}) + \nabla f_{\eta}(\x_k)^T(\x_{k+1}-\x_k)\notag\\
&+\tfrac{L_{\eta}}{2}\|\x_{k+1}-\x_k\|^2\notag\\
&=f_\eta(\x_k)+ \left(\nabla f_{\eta}(\x_k)+e_k+\theta_k+\delta_k\right)^T(\x_{k+1}-\x_k)\notag\\
&-(e_k+\theta_k+\delta_k)^T(\x_{k+1}-\x_k)+\tfrac{L_{\eta}}{2}\|\x_{k+1}-\x_k\|^2.
\end{align}
{Invoking 
the properties of the Euclidean projection and the Cartesian structure of $\Xscr$,} we have $
(\x_k-b^{-1}\gamma\left(\nabla f_\eta(\x_k)+e_k+\theta_k+\delta_k\right)-\x_{k+1})^T(\x_k-\x_{k+1})\leq 0$. This implies that 
\begin{align}\label{eqn:descent_lem_proof_eq2}
&\left(\nabla f_\eta(\x_k)+e_k+\theta_k+\delta_k\right)^T(\x_{k+1}-x_k)\notag\\
&\leq -\tfrac{b}{\gamma}\|\x_{k+1}-\x_k\|^2.
\end{align}
{In addition, we may also express $\left(e_k+\theta_k+\delta_k\right)^T(\x_{k+1}-\x_k)$ as follows.} 
\begin{align}\label{eqn:descent_lem_proof_eq3}
&-\left(e_k+\theta_k+\delta_k\right)^T(\x_{k+1}-\x_k) \notag\\
&\leq \tfrac{\gamma}{2b}\|e_k+\theta_k+\delta_k\|^2+\tfrac{b}{2\gamma}\|\x_{k+1}-\x_k\|^2.
\end{align}
Combining the inequalities \eqref{eqn:descent_lem_proof_eq1}, \eqref{eqn:descent_lem_proof_eq2}, and \eqref{eqn:descent_lem_proof_eq3} we obtain 
     \begin{align*}
      f_{\eta}(\x_{k+1})  & \leq   f_{\eta}(\x_k)+ \left( -\tfrac{b}{2\gamma}+\tfrac{L_{\eta}}{2}\right) \|\x_{k+1}-\x_k\|^2 \notag\\
&+\tfrac{\gamma}{2b} \|e_k+\theta_k+\delta_k\|^2.
    \end{align*}
From Definition \ref{def:res_maps} we obtain
     \begin{align*}
         & \quad f_{\eta}(\x_{k+1})  \\
         & \leq   f_{\eta}(\x_k)+ \left( -\tfrac{b}{2\gamma}+\tfrac{L_{\eta}}{2}\right)\tfrac{\gamma^2}{b^2} \left\|\tilde{G}_{\eta,b/\gamma}(\x_k, {e_k+\theta_k+\delta_k})\right\|^2\notag\\
&+\tfrac{\gamma}{2b} \|e_k+\theta_k+\delta_k\|^2\\
&=   f_{\eta}(\x_k)- \left( 1-\tfrac{L_{\eta}\gamma}{b}\right)\tfrac{\gamma}{2b} \left\|\tilde{G}_{\eta,b/\gamma}(\x_k, {e_k+\theta_k+\delta_k})\right\|^2\notag\\
&+\tfrac{\gamma}{2b} \|e_k+\theta_k+\delta_k\|^2.
    \end{align*}
Using Lemma \ref{lem:inexact_proj_2} and {by the requirement that} $\gamma< \tfrac{b}{L_{\eta}}$, 
     \begin{align}
      f_{\eta}(\x_{k+1})  & \leq      f_{\eta}(\x_k)- \left(1-\tfrac{L_{\eta}\gamma}{b}\right)\tfrac{\gamma}{4b} \left\|{G}_{\eta,b/\gamma}(\x_k)\right\|^2\notag \\
&+\left(\tfrac{\gamma}{2b}+\left( 1-\tfrac{L_{\eta}\gamma}{b}\right)\tfrac{\gamma}{2b}\right) \|e_k+\theta_k+\delta_k\|^2.\label{cont-rec}
    \end{align}
From the preceding relation, we obtain the result. 
\end{proof}
We now present an almost sure convergence guarantee for the sequence generated by Algorithm~\ref{algorithm:zo_nonconvex} by relying on the Robbins-Siegmund Lemma.
\begin{proposition}[{\bf Asymptotic guarantees for Alg.~\ref{algorithm:zo_nonconvex}}]
    Consider Algorithm \ref{algorithm:zo_nonconvex}. Let Assumptions~\ref{ass-1} and \ref{assum:random_vars} hold and $N_k:=(k+1)^{1+\delta}$ for $k \geq 0$ and $\delta>0$. Then the following hold. 
\noindent (i) $\|G_{\eta,b/\gamma}(\x_k)\| \xrightarrow[k \to \infty]{a.s.} 0.$

\noindent (ii) {Every limit point of $\{\x_k\}$ lies in the set of $2\eta$-Clarke stationary points of \eqref{eqn:prob} in an almost sure sense.} 

\end{proposition}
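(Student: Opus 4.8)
The plan is to recognize that the recursive inequality \eqref{eqn:recursive_ineq_lemma5}, after taking conditional expectations, is precisely of the form required by the Robbins--Siegmund Lemma, so that almost sure convergence follows once the stochastic error is shown to be summable. First I would rearrange \eqref{eqn:recursive_ineq_lemma5} into the form $c_1 \|G_{\eta,b/\gamma}(\x_k)\|^2 \leq f_\eta(\x_k) - f_\eta(\x_{k+1}) + c_2 \|e_k+\theta_k+\delta_k\|^2$, where $c_1 \triangleq (1-\tfrac{nL_0\gamma}{b\eta})\tfrac{\gamma}{4b}$ and $c_2 \triangleq (1-\tfrac{nL_0\gamma}{2b\eta})\tfrac{\gamma}{b}$ are both strictly positive under the stepsize restriction $\gamma \in (0,\tfrac{b\eta}{nL_0})$. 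Since this inequality holds pathwise, I would take the conditional expectation with respect to $\mathcal{F}_k$; because $\x_k$, and hence $G_{\eta,b/\gamma}(\x_k)$, is $\mathcal{F}_k$-measurable, the residual term passes through the conditioning unchanged.

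Next I would control the noise. Using $\|e_k+\theta_k+\delta_k\|^2 \leq 3(\|e_k\|^2+\|\theta_k\|^2+\|\delta_k\|^2)$ together with the second-moment bounds of Lemma~\ref{lem:stoch_error_var}(ii), I obtain $\mathbb{E}[\|e_k+\theta_k+\delta_k\|^2 \mid \mathcal{F}_k] \leq C/N_k$ almost surely, where $C$ is an explicit constant depending only on $n$, $\nu$, $\eta$, $L_0$, $b$, and $\hat f$. Defining the nonnegative sequence $V_k \triangleq f_\eta(\x_k) - f^*_\eta$, which is well defined and bounded below since $f_\eta$ inherits Lipschitz continuity and $\Xscr$ is bounded (so $f^*_\eta$ is finite), the conditioned recursion becomes $\mathbb{E}[V_{k+1}\mid \mathcal{F}_k] \leq V_k - c_1\|G_{\eta,b/\gamma}(\x_k)\|^2 + c_2 C/N_k$. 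With $N_k = (k+1)^{1+\delta}$ and $\delta>0$, the summability $\sum_{k} N_k^{-1} = \sum_k (k+1)^{-(1+\delta)} < \infty$ holds deterministically, so the Robbins--Siegmund Lemma applies with summable perturbation and yields both that $\{V_k\}$ converges almost surely and that $\sum_{k} \|G_{\eta,b/\gamma}(\x_k)\|^2 < \infty$ almost surely. As a summable nonnegative series has vanishing terms, $\|G_{\eta,b/\gamma}(\x_k)\| \to 0$ almost surely, establishing (i).

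For (ii) I would argue on the almost sure event on which (i) holds. The iterates remain in the bounded set $\Xscr$, so any sample path admits a convergent subsequence $\x_{k_j}\to \x_\infty \in \Xscr$. Since $\nabla f_\eta$ is Lipschitz continuous by Lemma~\ref{lemma:props_local_smoothing}(iv) and the Euclidean projection $\Pi_\Xscr$ is nonexpansive, the residual map $G_{\eta,b/\gamma}(\cdot)$ of Definition~\ref{def:res_maps} is continuous; passing to the limit along $\{k_j\}$ and invoking (i) gives $G_{\eta,b/\gamma}(\x_\infty) = 0$. By the fixed-point characterization of the projection this is equivalent to $0 \in \nabla_\x f_\eta(\x_\infty) + \mathcal{N}_\Xscr(\x_\infty)$, whereupon Proposition~\ref{prop_equiv}(ii) delivers $0 \in \partial_{2\eta} f(\x_\infty) + \mathcal{N}_\Xscr(\x_\infty)$, i.e.\ $\x_\infty$ is a $2\eta$-Clarke stationary point of \eqref{eqn:prob}.

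The routine parts are the algebraic rearrangement and the aggregation of the moment bounds; the step requiring the most care is the almost sure subsequential limit argument in (ii). One must fix a single probability-one event, namely the one on which $\|G_{\eta,b/\gamma}(\x_k)\|\to 0$, and argue that on that event \emph{every} limit point is stationary, which hinges on the continuity of the residual map and hence on the Lipschitz smoothness of $f_\eta$. The passage from the vanishing residual to the set-valued inclusion is then supplied cleanly by Proposition~\ref{prop_equiv}(ii), rather than by any delicate measurability argument.
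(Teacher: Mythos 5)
Your proposal is correct and follows essentially the same route as the paper: conditional expectation of the descent recursion, the $C/N_k$ noise bound from Lemma~\ref{lem:stoch_error_var}, Robbins--Siegmund to get summability of $\|G_{\eta,b/\gamma}(\x_k)\|^2$ and hence vanishing of the residual, then Proposition~\ref{prop_equiv}(ii) for the stationarity of limit points. The only differences are cosmetic — the paper derives the vanishing of the residual from summability by contradiction rather than directly, and your part (ii) spells out the continuity of the residual map and the compactness argument that the paper leaves implicit.
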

\begin{proof}
Let $f^*_\eta\triangleq \inf_{x \in \Xscr} f_\eta(\x)$. By taking conditional expectations with respect to $\mathcal{F}_k$ on the both sides of the inequality \eqref{eqn:recursive_ineq_lemma5}, we have 
\begin{align}
& \quad \mathbb{E}\left[(f_{\eta}(\x_{k+1}) -f^*_\eta) \mid \mathcal{F}_k\right] \notag\\ 
& \leq      (f_{\eta}(\x_k)-f^*_\eta)- \left(1-\tfrac{L_{\eta}\gamma}{b}\right)\tfrac{\gamma}{4b} \left\|{G}_{\eta,b/\gamma}(\x_k)\right\|^2\notag \\
&+\left(\tfrac{\gamma}{2b}+\left( 1-\tfrac{L_{\eta}\gamma}{b}\right)\tfrac{\gamma}{2b}\right) \mathbb{E}\left[\|e_k+\theta_k+\delta_k\|^2 \mid \mathcal{F}_k\right] \notag\\
& \leq      (f_{\eta}(\x_k)-f^*_\eta)- \left(1-\tfrac{L_{\eta}\gamma}{b}\right)\tfrac{\gamma}{4b} \left\|{G}_{\eta,b/\gamma}(\x_k)\right\|^2\notag \\
&+\left(\tfrac{\gamma}{2b}+\left( 1-\tfrac{L_{\eta}\gamma}{b}\right)\tfrac{\gamma}{2b}\right) \tfrac{c}{N_k}.\label{cont-rec3}
    \end{align} 
    By the Robbins-Siegmund Lemma, the summability of
    $\|e_k+\theta_k+\delta_k\|^2$ ({which holds by} choice of $N_k$), and the nonnegativity of
    $f_{\eta}(\x_{k}) -f^*_\eta$, we have that
    $\{(f_{\eta}(\x_k)-f^*_\eta)\}$ is convergent a.s. and
    $\sum_{k=1}^{\infty} \left\|{G}_{\eta,b/\gamma}(\x_k)\right\|^2 < \infty$
almost surely. It remains to show that with probability one, $\|G_{\eta,b/\gamma}(\x_k)\| \to 0$ as $k \to \infty$.
We proceed by contradiction. Suppose for $\omega \in \Omega_1 \subset
\Omega$ and $\mu(\Omega_1) > 0$ (i.e. with finite probability),
$\|G_{\eta,b/\gamma}(\x_k)\| \xrightarrow{k \in \Kscr(\omega)} \epsilon(\omega)
> 0$ where $\Kscr(\omega)$ is a random subsequence. Consequently, for every
$\omega \in \Omega_1$ and $\tilde{\delta} > 0$, there exists $K(\omega)$ such
that $k \geq K(\omega)$, $\| G_{\eta,b/\gamma}(\x_k) \| \geq
\tfrac{\epsilon(\omega)}{2}$.    Consequently, we have that $\sum_{k \to
\infty} \|G_{\eta,b/\gamma}(\x_k)\|^2 \geq \sum_{k \in \Kscr(\omega)}
\|G_{\eta,b/\gamma} (\x_k)\|^2 \geq \sum_{k \in \Kscr(\omega), k \geq
K(\omega)} \|G_{\eta,b/\gamma} (\x_k)\|^2 = \infty$ with finite probability.
But this leads to a contradiction, implying that $\|G_{\eta,b/\gamma}(\x_k)\|^2
\xrightarrow[k \to \infty]{a.s.} 0.$ 

\smallskip

\noindent {(ii) Recall from Proposition \ref{prop_equiv} that if $\x$ satisfies $G_{\eta,b/\gamma}(\x) = 0$, it is a $2\eta$-Clarke stationary point of \eqref{eqn:prob}, i.e. $0 \in \partial_{2\eta} f(\x)+\Nscr_{\Xscr}(\x).$ Since almost every limit point of $\{\x_k\}$ satisfies $G_{\eta,b/\gamma}(\x) = 0$, the result follows.}  
\end{proof}
 
{We now conclude this section with a formal rate statement and complexity guarantees in terms of projection steps on $\Xscr_i$ as well as sampled function evaluations.}
\begin{theorem}[{\bf Rate and complexity {statements for Alg.}~\ref{algorithm:zo_nonconvex}}]\label{thm:vr-rb-zo}\em
    Consider Algorithm \ref{algorithm:zo_nonconvex}. Let Assumptions~\ref{ass-1} and \ref{assum:random_vars} hold {and} ${N_k:=\lceil 1+ \tfrac{k+1}{\eta^a}\rceil}$ for $k \geq 0$ and for some {$a \geq 0$}. 
    
\noindent {\bf{(i)}} For $\gamma<\frac{b\eta}{n L_0}$ and all $K> \tfrac{2}{1-\lambda}$ with $\ell\triangleq  \lceil \lambda K\rceil$ we have
\begin{align*}
& \mathbb{E}\left[ \|G_{\eta,b/\gamma} (\x_R)\|^2\right] \leq \left(\left( 1-\tfrac{nL_0\gamma}{b\eta}\right) \tfrac{\gamma}{4b}(1-\lambda)K \right)^{-1}\\
& \left(\mathbb{E}\left[f(\x_{\ell})\right] -f^* +2L_0\eta+ 3n^2\left((3b-2)\left(\nu^2+ L_0^2 {\eta^2}\right)\right.\right.\\
&\left.\left.+3(b-1){{\hat f}^2} \right)(0.5-\ln(\lambda)) {\eta^{2-a}}\right).
\end{align*}

\noindent {\bf{(ii)}} Suppose $\gamma= \tfrac{b\eta}{2nL_0}$. Let $\epsilon>0$ be an arbitrary scalar and $K_{\epsilon}$ be such that $ \mathbb{E}\left[ \|G_{\eta,b/\gamma} (\x_R)\|\right]   \leq \epsilon$. Then:

    \noindent {{(ii-1)}} The total number of projection steps on component sets is $K_{\epsilon}=\mathcal{O}\left( {\eta^{-1+[a-2]_+}}\epsilon^{-2}\right)$.

\noindent {{(ii-2)}} The total sample complexity is $\mathcal{O}\left({\eta^{-a-2+2[a-2]_+}}\epsilon^{-4} \right)$.
\end{theorem}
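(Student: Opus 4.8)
The plan is to turn the one-step descent recursion \eqref{eqn:recursive_ineq_lemma5} into a telescoping sum over the tail window $\{\ell,\dots,K\}$ with $\ell\triangleq\lceil\lambda K\rceil$, and then average, exploiting that the returned index $R$ is drawn uniformly from this window. First I would take conditional expectations with respect to $\mathcal{F}_k$ in \eqref{eqn:recursive_ineq_lemma5}. Bounding the noise via $\|e_k+\theta_k+\delta_k\|^2\le 3(\|e_k\|^2+\|\theta_k\|^2+\|\delta_k\|^2)$, substituting the three conditional second-moment estimates of Lemma~\ref{lem:stoch_error_var}(ii), and factoring out $n^2/(N_k\eta^2)$ yields
\begin{align*}
\mathbb{E}[\|e_k+\theta_k+\delta_k\|^2\mid\mathcal{F}_k]\le \tfrac{3n^2}{N_k\eta^2}\left((3b-2)(\nu^2+L_0^2\eta^2)+3(b-1)\hat f^2\right),
\end{align*}
which is exactly the constant in the statement divided by $N_k\eta^2$. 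Since $\gamma>0$ gives $\big(1-\tfrac{nL_0\gamma}{2b\eta}\big)\le 1$, the coefficient multiplying this error term may be replaced by $\gamma/b$.

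Next I would take total expectations and sum over $k=\ell,\dots,K$. The $f_\eta$ terms telescope to $\mathbb{E}[f_\eta(\x_\ell)]-\mathbb{E}[f_\eta(\x_{K+1})]$; bounding $f_\eta(\x_\ell)\le f(\x_\ell)+L_0\eta$ and $f_\eta(\x_{K+1})\ge f_\eta^*\ge f^*-L_0\eta$ through Lemma~\ref{lemma:props_local_smoothing}(iii) produces the term $\mathbb{E}[f(\x_\ell)]-f^*+2L_0\eta$. For the accumulated variance I would use $N_k\ge (k+1)/\eta^a$, so that $\sum_{k=\ell}^{K}1/N_k\le \eta^a\sum_{k=\ell}^{K}1/(k+1)$, and then bound the harmonic sum by the integral estimate $\ln\tfrac{K+1}{\ell}\le -\ln\lambda+\tfrac1K$; the hypothesis $K>\tfrac{2}{1-\lambda}$ forces $1/K<1/2$ and thereby delivers the factor $(0.5-\ln\lambda)$. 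Collecting these contributions, dividing by the coefficient $\big(1-\tfrac{nL_0\gamma}{b\eta}\big)\tfrac{\gamma}{4b}$, and using $\sum_{k=\ell}^{K}\mathbb{E}[\|G_{\eta,b/\gamma}(\x_k)\|^2]=(K-\ell+1)\,\mathbb{E}[\|G_{\eta,b/\gamma}(\x_R)\|^2]$ together with $K-\ell+1\ge(1-\lambda)K$ yields the rate statement (i).

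For (ii), I would specialize $\gamma=\tfrac{b\eta}{2nL_0}$, which makes $1-\tfrac{nL_0\gamma}{b\eta}=\tfrac12$ and collapses the prefactor in (i) to order $1/(\eta K)$. Applying Jensen's inequality $\mathbb{E}[\|G_{\eta,b/\gamma}(\x_R)\|]\le\sqrt{\mathbb{E}[\|G_{\eta,b/\gamma}(\x_R)\|^2]}$, requiring the right-hand side to be at most $\epsilon$, and solving for $K$ gives the projection complexity $K_\epsilon$; the exponent $-1+[a-2]_+$ emerges by determining whether the $\mathcal{O}(1)$ optimality-gap and bias terms or the variance term dominate the numerator, the latter's $\eta$-power being set by how the $\eta^a$ batching gain offsets the $1/\eta^2$ smoothing penalty. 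The sample complexity (ii-2) then follows by summing the batch sizes, $\sum_{k=0}^{K_\epsilon-1}N_k=\mathcal{O}(K_\epsilon^2/\eta^a)$, and inserting the expression for $K_\epsilon$.

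The main obstacle is the bookkeeping of the powers of $\eta$: spherical smoothing inflates the estimator variance by $1/\eta^2$, which must be counteracted by the growing mini-batch $N_k=\lceil 1+(k+1)/\eta^a\rceil$, while the irreducible smoothing bias contributes an additive $2L_0\eta$. Pinning down which of these competing terms governs the numerator of the rate bound — and hence the threshold $[a-2]_+$ — is the delicate step; once the moment estimates of Lemma~\ref{lem:stoch_error_var} are in hand, the rest is a standard telescoping-and-averaging argument over the randomized-output window.
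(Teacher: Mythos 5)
Your proposal is correct and follows essentially the same route as the paper's proof: telescoping the descent recursion of Lemma~5 over the tail window $\{\lceil\lambda K\rceil,\dots,K\}$, inserting the moment bounds of Lemma~\ref{lem:stoch_error_var}(ii), controlling the bias via Lemma~\ref{lemma:props_local_smoothing}(iii), bounding the harmonic sum by $0.5-\ln\lambda$, and then specializing $\gamma$ and applying Jensen's inequality for the complexity counts. The only differences are cosmetic (summation endpoint $K$ versus $K-1$, and your bound correctly yields the exponent $\eta^{a-2}$, matching the paper's proof rather than the $\eta^{2-a}$ typo in the theorem statement).
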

\begin{proof}
\noindent {\bf (i)} Consider the inequality of Lemma \ref{lem:inexact_proj_2}.  Summing from $k =\ell, \ldots, K-1$ where $\ell\triangleq  \lceil \lambda K\rceil$ we have
 $ \left( 1-\tfrac{nL_0\gamma}{b\eta}\right) \tfrac{\gamma}{4b}\sum_{{k=\ell}}^{K-1} \|G_{\eta,b/\gamma} (\x_k)\|^2   \leq   f_{\eta}(\x_\ell)-      f_{\eta}(\x_{K}) 
  +{\left( 1-\tfrac{nL_0\gamma}{2b\eta}\right)} \tfrac{\gamma}{b} \sum_{{k=\ell}}^{K-1}\|e_k+\theta_k+\delta_k\|^2.$
Taking expectations {on} both sides, we obtain
\begin{align*}
    & \quad \left( 1-\tfrac{nL_0\gamma}{b\eta}\right) \tfrac{\gamma}{4b}(K-\ell)\mathbb{E}\left[ \|G_{\eta,b/\gamma} (\x_R)\|^2\right]\leq \\
    &  {\left( 1-\tfrac{nL_0\gamma}{2b\eta}\right)} \tfrac{\gamma}{b}\sum_{k=\ell}^{K-1} \mathbb{E}\left[\|e_k+\theta_k+\delta_k\|^2\right] + \mathbb{E}\left[f_{\eta}(\x_{\ell})\right] - {f^*_{\eta}}.
\end{align*}
{The last term on the right can be expressed as} 
\begin{align*}
& \mathbb{E}\left[f_{\eta}(\x_{\ell})\right] - {f^*_{\eta}} =   \mathbb{E}\left[ f(\x_{\ell}) + f_{\eta}(\x_{\ell})-f(\x_{\ell})\right]-f^*_{\eta} +f^*\\
 &-f^*= \mathbb{E}\left[{f(\x_{\ell})}\right]-f^*+ \mathbb{E}\left[\left | f_{\eta}(\x_{\ell})-f(\x_{\ell})\right|\right] + \left| f^*-f^*_\eta\right| \\
&\stackrel{\tiny{\hbox{Lemma \ref{lemma:props_local_smoothing}(iii)}}}{\leq}
 \mathbb{E}\left[f(\x_{\ell})\right] -f^* +2L_0\eta.
\end{align*}
From Lemma \ref{lem:stoch_error_var} (ii), we obtain
\begin{align*}
&\mathbb{E}\left[\|e_k+\theta_k+\delta_k\|^2\right] \leq 3\mathbb{E}\left[\mathbb{E}\left[\|e_k\|^2+ \|\theta_k\|^2+\|\delta_k\|^2\mid \mathcal{F}_k\right]\right] \\
& \leq \tfrac{{3}n^2\nu^2}{\eta^2N_k}+\tfrac{{3}L_0^2n^2}{N_k}+\tfrac{{9}n^2(b-1)}{N_k}\left(\tfrac{\nu^2}{\eta^2}+ L_0^2+\left(\tfrac{ \hat f}{\eta}\right)^2 \right)\\
& \leq \tfrac{3n^2}{N_k}\left(\tfrac{(3b-2)\nu^2}{\eta^2}+ (3b-2)L_0^2+3(b-1)\left(\tfrac{ \hat f}{\eta}\right)^2 \right).
\end{align*}
From the preceding relations we obtain
\begin{align*}
 &\mathbb{E}\left[ \|G_{\eta,b/\gamma} (\x_R)\|^2\right] \leq 
{\left(\left( 1-\tfrac{nL_0\gamma}{b\eta}\right) \tfrac{\gamma}{4b}(K-\ell)\right)}^{-1}{\times}\\
 &\left( \mathbb{E}\left[f(\x_{\ell})\right] -f^* +2L_0\eta+ \left((3b-2)\nu^2+ (3b-2)L_0^2 {\eta^2}\right.\right.\\
 &\left.\left.+3(b-1){(\hat f)^2} \right)\sum_{k=\ell}^{K-1} \tfrac{3n^2}{{\eta^2}N_k}\right) \\
& \leq {\left(\left( 1-\tfrac{nL_0\gamma}{b\eta}\right) \tfrac{\gamma}{4b}(K-\ell)\right)}^{-1}{\times}\\
 &\left( \mathbb{E}\left[f(\x_{\ell})\right] -f^* +2L_0\eta+ \left((3b-2)\nu^2+ (3b-2)L_0^2 {\eta^2}\right.\right.\\
 &\left.\left.+3(b-1){(\hat f)^2} \right)\sum_{k=\ell}^{K-1} \tfrac{3n^2}{{\eta^{2-a}}{k+1}}\right),  
\end{align*}
{where the last inequality is a consequence of 
$N_k:={\lceil 1+\tfrac{k+1}{\eta^a} \rceil \geq \tfrac{k+1}{\eta^a}}$}. {Recall} that $K> {\tfrac{2}{1-\lambda}}$ implies $\ell \leq K-1$ and  
 $ \textstyle\sum_{k={\ell}}^{K-1}\tfrac{1}{k+1}
 \leq 0.5 +\ln\left(\tfrac{N}{\lambda N+1}\right)\leq 0.5-\ln(\lambda).$
 {Further}, $K-\ell \geq K-\lambda K=(1-\lambda)K$ {implying that} 
\begin{align*}
 &\mathbb{E}\left[ \|G_{\eta,b/\gamma} (\x_R)\|^2\right] \leq \left(\left( 1-\tfrac{nL_0\gamma}{b\eta}\right) \tfrac{\gamma}{4b}(1-\lambda)K\right)^{-1}{\times}\\
 &\left(\mathbb{E}\left[f(\x_{\ell})\right] -f^* +2L_0\eta+ 3n^2\left((3b-2)\nu^2+ (3b-2)L_0^2 {\eta^2}\right. \right.\\
 &\left.\left.+3(b-1) {\hat f}^2 \right)(0.5-\ln(\lambda)){\eta^{a-2}}\right) .
\end{align*}
\noindent {\bf (ii)} To show (ii-1), using the relation in part (i) and substituting $\gamma:= \tfrac{b\eta}{2nL_0}$ we obtain
\begin{align*}
& \mathbb{E}\left[ \|G_{\eta,b/\gamma} (\x_R)\|^2\right] \leq (16nL_0)\left(\eta(1-\lambda)K\right)^{-1} {\times} \\
& \left(\mathbb{E}\left[f(\x_{\ell})\right] -f^* +2L_0\eta+ 3n^2\left((3b-2)\nu^2+ (3b-2)L_0^2 {\eta^2}\right. \right.\\
&\left.\left.+3(b-1) {\hat f}^2 \right)(0.5-\ln(\lambda)){\eta^{a-2}}\right).
\end{align*}
From Jensen's inequality, {it follows} that $ \mathbb{E}\left[ \|G_{\eta,1/\gamma} (\x_R)\|\right] \leq \sqrt{\mathcal{O}\left({\eta^{-1+[a-2]_+}K^{-1}}\right)}$ and thus, we have $K_{\epsilon}=\mathcal{O}\left({\eta^{-1+[a-2]_+}}\epsilon^{-2}\right)$. Next, we show (ii-2). The total sample complexity of upper-level is as follows.
\begin{align*}
    \textstyle\sum_{k=0}^{K_\epsilon} N_k & = \textstyle\sum_{k=0}^{K_\epsilon} \lceil 1+ \tfrac{(k+1)}{\eta^a}\rceil 
        \leq \mathcal{O}(K_{\epsilon}) +  \mathcal{O} (\tfrac{K_\epsilon^2}{\eta^{a}})\\
        & \leq \mathcal{O}\left({\eta^{-a-2+2[a-2]_+}} \epsilon^{-4}\right).
\end{align*}
\end{proof}
\begin{remark}
        (i) For $0\leq a<2$, {we attain the best iteration complexity of $\mathcal{O}\left( \eta^{-1}\epsilon^{-2}\right)$ and sample complexity of $\mathcal{O}\left( \eta^{-2}\epsilon^{-4}\right)$ in terms of dependence of $\eta$ when $a = 0$.} (ii) When $a\geq 2$, these statements change to $\mathcal{O}\left( \eta^{a-3}\epsilon^{-2}\right)$ and $\mathcal{O}\left( \eta^{a-6}\epsilon^{-4}\right)$, respectively. Within this range, for $a:=6$ we obtain a sample complexity of $\mathcal{O}\left(\epsilon^{-4}\right)$ that is invariant in terms of $\eta$, {but  a small $\eta$ leads to a large batch size making the implementation of the scheme less appealing}. Exploring these trade-offs remains a future direction to our research. (iii) Our results are comparable to those obtained in~\cite{zhang20complexity}, which uses neither smoothing nor a zeroth-order framework. 
\end{remark}

\section{Concluding remarks}
While a significant amount of prior research has analyzed nonsmooth and
nonconvex optimization problems, much of this effort has relied on either the
imposition of structural assumptions on the problem or required weak convexity,
rather than general nonconvexity. Little research, if any, is available in
stochastic regimes to contend with general nonconvex and nonsmooth optimization
problems. To this end, we develop a randomized smoothing framework which
allows for claiming that a stationary point of the $\eta$-smoothed problem is a
$2\eta$-stationary point for the original problem in the Clarke sense. By
utilizing a suitable residual function that provides a metric for stationarity
for the smoothed problem, we present a  zeroth-order framework reliant on
utilizing sampled function evaluations implemented in a block-structured
regime. In this setting, we make two sets of contributions for the sequence
generated by the proposed scheme. (i) The residual function of the smoothed
problem tends to zero almost surely along the generated sequence; (ii) To
compute an $\x$ that ensures that the expected {norm of} {the residual of the $\eta$-smoothed problem} is within
$\epsilon$, we proceed to show that no more than $\mathcal{O}(\eta^{-1} \epsilon^{-2})$
projection steps and $\mathcal{O}\left(\eta^{-2}\epsilon^{-4}\right)$ function
evaluations are required.

\bibliographystyle{plain}
\bibliography{demobib_v1,wsc11-v03a,ref_paIRIG_v01_fy}

\end{document}